\documentclass[review,3p,times,sort&compress]{elsarticle}
\usepackage{amssymb}
\usepackage{graphicx,fancyhdr}
\usepackage{multirow}
\usepackage{dsfont}
\usepackage{subfigure}
\usepackage{bbm}
\usepackage{booktabs}
\usepackage{amsthm}
\usepackage{bbm,bm}
\DeclareMathAlphabet{\mathpzc}{OT1}{pzc}{m}{it}
\usepackage{bbm}
\usepackage{dsfont}
\usepackage{yfonts}
\usepackage{amsmath}
\usepackage{amssymb}
\usepackage{amsfonts}
\usepackage{mathrsfs,float}
\usepackage{color}
\usepackage[colorlinks, citecolor=blue]{hyperref}

\makeatletter

\journal{}

\begin{document}
	
	\begin{frontmatter}
		
		
		\newtheorem{theorem}{Theorem}[section]
		\newtheorem{remark}[theorem]{Remark}
		\newtheorem{ex}{Example}
		\newtheorem{case}{Case}
		\newtheorem{pro}[theorem]{Proposition}
		\newtheorem{defi}[theorem]{Definition}
		\newtheorem{ass}{Assumption}
		\newtheorem{lemma}[theorem]{Lemma}
	\newtheorem{corollary}[theorem]{Corollary}
\newtheorem{assumption}[theorem]{Assumption}
		\newproof{pf}{Proof}
		\newproof{pot}{Proof of Theorem \ref{thm2}}
		\title{Local Legendre Frame Approximation from Equispaced Data}
		\author[1]{Benxue Gong}
	
		\author[1]{Zhenyu Zhao}
	\author[1]{Chenyang Wang}
		\address[1]{School of Mathematics and Statistics, Shandong University of Technology, Zibo, 255049, China}
\begin{abstract}
We propose a local Legendre frame (LLF) method for function approximation from equispaced data on a finite interval. Motivated by the difficulty of stable high-order polynomial approximation at equispaced points, especially in the presence of the Runge phenomenon, the method partitions the interval into subintervals, maps each subinterval to a common reference interval, and computes local coefficients by a truncated singular value decomposition (TSVD) regularization. Since all subintervals share the same local sampling matrix, the method admits a natural offline--online implementation.

We establish a quasi-optimal estimate for the regularized reconstruction and discuss practical parameter selection. Numerical results show that LLF attains high accuracy for relatively smooth and moderately oscillatory functions, while it remains applicable to highly oscillatory functions, although comparable accuracy generally requires more sampling points. For continuous piecewise smooth functions with derivative singularities, the method also provides an effective detect--localize--correct strategy based on one-sided coefficient-energy indicators. These results indicate that LLF provides a stable and flexible local approximation framework for equispaced data.
\end{abstract}
\begin{keyword}
Equispaced approximation\sep local Legendre frame\sep polynomial frame approximation\sep TSVD regularization\sep local Fourier extension\sep piecewise smooth functions
\end{keyword}
		
	\end{frontmatter}
	
\section{Introduction}

Function approximation from sampled data is a central topic in numerical analysis, with applications in scientific computing, inverse problems, and signal processing. A fundamental difficulty arises when approximating analytic functions from equispaced samples. The classical result of Platte, Trefethen, and Kuijlaars \cite{Platte2011} shows that any method achieving exponential convergence for such problems must necessarily be exponentially ill-conditioned. This impossibility result reflects the well-known instability of polynomial interpolation at equispaced nodes, commonly referred to as the Runge phenomenon.

Various approaches have been proposed to mitigate this difficulty \cite{AdcockPlatte2016,Boyd2009}. A particularly successful strategy is based on approximation over an extended domain combined with regularization. Within this framework, numerical frame approximation has emerged as a powerful paradigm \cite{AdcockHuybrechs2019,AdcockHuybrechs2020}. By employing redundant systems instead of bases, frame methods provide enhanced flexibility while allowing stability to be controlled through regularization.

A prominent example is the Fourier extension method \cite{Boyd2002,Huybrechs2010}, where a non-periodic function is approximated by a Fourier series defined on a larger interval. This approach effectively suppresses Gibbs-type artifacts and achieves rapid convergence when combined with suitable regularization \cite{AdcockHansen2012,AdcockHansenShadrin2014}. Extensions and efficient implementations of this idea have been extensively studied; see, for example, \cite{Bruno2010,BrunoPaul2022,Plonka2018}.

For problems posed on finite intervals, global approximation methods may suffer from high computational cost or limited adaptability. Domain decomposition techniques offer an effective alternative \cite{Gottlieb1989,Israeli1993,Israeli1994}. In particular, recent work on local Fourier extension methods \cite{Zhao2025local,Zhao2025weighted,Zhao2025fast} demonstrates that combining interval subdivision with reusable local discretizations leads to efficient offline--online implementations and improved adaptability to spatially varying features.

On the polynomial side, interval subdivision has long been used to alleviate the Runge phenomenon \cite{Boyd2009}. By reducing the size of subintervals, local approximation becomes more favorable. However, direct polynomial interpolation on equispaced nodes remains unstable when the local degree is large. This limitation is again consistent with the impossibility theorem \cite{Platte2011}, indicating that stability must be enforced through regularization rather than interpolation alone.

Polynomial frame approximation provides such a mechanism \cite{AdcockHuybrechs2019,AdcockShadrin2023}. By considering orthogonal polynomials on an extended interval and computing coefficients via regularized least squares, one obtains stable approximations from equispaced data. With appropriate oversampling and truncation, the method achieves rapid error decay down to a prescribed tolerance while maintaining controlled conditioning.

Motivated by these developments, we propose a \emph{local Legendre frame} (LLF) method for function approximation from equispaced samples. The main idea is to combine interval subdivision with polynomial frame approximation. The computational domain is partitioned into subintervals, each mapped to a common reference interval. On each subinterval, the function is approximated using Legendre polynomials defined on an extended domain, and the coefficients are computed via truncated singular value decomposition (TSVD). Since the same local discretization is used throughout, the associated sampling matrix and its TSVD can be precomputed and reused, leading to an efficient offline--online structure.

Numerical experiments show that the proposed LLF method achieves high accuracy for smooth and oscillatory functions using equispaced data, while maintaining numerical stability. Compared with local Fourier extension methods, LLF typically requires fewer effective degrees of freedom per subinterval, leading to slightly reduced online computational cost in certain regimes. In addition, the framework extends naturally to continuous piecewise smooth functions: by exploiting local coefficient indicators, singularity-containing subintervals can be detected and treated using one-sided reconstruction and interface correction.

The remainder of the paper is organized as follows. Section~2 introduces the construction of the local Legendre frame. Section~3 discusses the numerical implementation and parameter selection. Section~4 reports numerical experiments, including smooth, oscillatory, and piecewise smooth cases. Section~5 concludes the paper.
\section{Local Legendre frame approximation}
\label{sec:method}

\subsection{Partition and local rescaling}

Let
\[
I=[a,b], \qquad a=a_0<a_1<\cdots<a_K=b,
\]
and denote
\[
I_k=[a_{k-1},a_k], \qquad k=1,2,\dots,K.
\]
For a function \(f:I\to\mathbb{R}\) (or \(\mathbb{C}\)), write
\[
f_k(x)=f(x), \qquad x\in I_k.
\]
To transfer all local approximation problems to a common reference interval, we introduce the affine transformation
\[
x=c_k+s_k t, \qquad t\in \Lambda:=[-1,1],
\]
where
\[
c_k=\frac{a_{k-1}+a_k}{2}, \qquad s_k=\frac{a_k-a_{k-1}}{2}.
\]
The corresponding scaled local function is
\[
g_k(t)=f_k(c_k+s_k t), \qquad t\in \Lambda.
\]

Thus, the global approximation problem on \(I\) is reduced to a family of local approximation problems for the functions \(g_k\) on the fixed reference interval \(\Lambda\).

\subsection{Piecewise polynomial approximation as a comparison tool}

We first recall the approximation effect of interval subdivision in the classical interpolation setting. This will be used later only as a comparison tool in the quasi-optimal estimate for the local frame method.

Fix an integer \(n\ge 1\), and let
\[
t_j=-1+jh, \qquad h=\frac{2}{n}, \qquad j=0,1,\dots,n,
\]
be the equispaced nodes on \(\Lambda\). For each \(k\), let \(L_n g_k\) denote the degree-\(n\) Lagrange interpolation polynomial of \(g_k\) at these nodes, and define the corresponding piecewise polynomial approximation of \(f\) by
\[
(L_n f)(x):=(L_n g_k)\!\left(\frac{x-c_k}{s_k}\right), \qquad x\in I_k.
\]

The following lemma quantifies the approximation effect of interval subdivision.

\begin{lemma}[Piecewise interpolation error]
\label{lem:piecewise-interpolation}
Assume that \(f\in C^{n+1}[a,b]\) and
\[
\|f^{(n+1)}\|_{L^\infty(a,b)}\le C_f.
\]
Let
\[
h_I=\max_{1\le k\le K}(a_k-a_{k-1})
\]
be the maximal subinterval length. Then, for all \(x\in[a,b]\),
\[
|f(x)-L_n f(x)|\le \frac{h_I^{\,n+1}}{n^{\,n+1}}\,C_f.
\]
\end{lemma}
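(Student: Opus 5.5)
The plan is to reduce the estimate to the classical Cauchy remainder formula for Lagrange interpolation, applied on each subinterval \(I_k\) separately, and then take the worst case over \(k\). Fix \(k\) and \(x\in I_k\). Since the map \(t\mapsto c_k+s_k t\) is affine, \((L_n g_k)((x-c_k)/s_k)\) is a polynomial of degree at most \(n\) in \(x\). It agrees with \(f\) at the mapped nodes \(x_j=c_k+s_k t_j=a_{k-1}+jH_k\), \(j=0,\dots,n\), where \(H_k=(a_k-a_{k-1})/n\). So \(L_n f\) restricted to \(I_k\) is exactly the degree-\(n\) Lagrange interpolant of \(f\) at \(n+1\) equispaced nodes of \(I_k\) with spacing \(H_k\). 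Because \(f\in C^{n+1}[a,b]\), the standard remainder formula gives, for some \(\xi\in I_k\),
\[
f(x)-L_nf(x)=\frac{f^{(n+1)}(\xi)}{(n+1)!}\,\omega_k(x),\qquad \omega_k(x)=\prod_{j=0}^{n}(x-x_j),
\]
and hence \(|f(x)-L_nf(x)|\le C_f\,|\omega_k(x)|/(n+1)!\).

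The key step is the node-polynomial bound \(|\omega_k(x)|\le \tfrac14\, n!\,H_k^{\,n+1}\) for all \(x\in I_k\). To prove it, locate \(x\) in some cell \([x_i,x_{i+1}]\). The two neighbouring factors satisfy \(|(x-x_i)(x-x_{i+1})|\le H_k^2/4\), since the product of distances to the endpoints is maximised at the midpoint. For \(j<i\) we use \(|x-x_j|\le (i+1-j)H_k\), and for \(j>i+1\) we use \(|x-x_j|\le (j-i)H_k\). Multiplying these bounds gives \(|\omega_k(x)|\le \tfrac14 H_k^{\,n+1}(i+1)!\,(n-i)!\). Finally, \((i+1)!\,(n-i)!\le n!\) for \(0\le i\le n-1\), because \(\binom{n+1}{i+1}\ge n+1\) for these \(i\). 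This combinatorial bookkeeping is the only mildly delicate point, and I expect it to be the main (though routine) obstacle.

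Combining the two steps gives
\[
|f(x)-L_nf(x)|\le \frac{C_f\,H_k^{\,n+1}}{4(n+1)}\le C_f\,\frac{(a_k-a_{k-1})^{n+1}}{n^{n+1}}\le \frac{h_I^{\,n+1}}{n^{\,n+1}}\,C_f,
\]
where the last step uses \(a_k-a_{k-1}\le h_I\). Since \(x\in I_k\) and \(k\) were arbitrary, the bound holds on all of \([a,b]\). At interior breakpoints \(a_k\) both adjacent local interpolants equal \(f(a_k)\), so \(L_nf\) is well defined there. The argument in fact yields the sharper constant \(1/(4(n+1))\), which we simply discard to match the stated form.
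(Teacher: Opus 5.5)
Your proof is correct and follows essentially the same route as the paper: the Cauchy remainder for equispaced Lagrange interpolation plus a node-polynomial bound. The only differences are minor. You handle the scaling directly in the physical variable with spacing $H_k=s_k h$, whereas the paper works on $\Lambda$ and uses $g_k^{(n+1)}=s_k^{n+1}f^{(n+1)}(c_k+s_k\,\cdot)$. You also prove the sharper bound $\tfrac14\, n!\,H_k^{n+1}$, where the paper uses the cruder, unjustified bound $|\omega_{n+1}|\le (n+1)!\,h^{n+1}$; this gains you the extra factor $1/(4(n+1))$.
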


\begin{proof}
For each \(k\), the interpolation remainder for \(g_k\) on \(\Lambda\) is
\[
g_k(t)-(L_n g_k)(t)
=
\frac{g_k^{(n+1)}(\xi_t)}{(n+1)!}\,\omega_{n+1}(t),
\qquad
\omega_{n+1}(t)=\prod_{j=0}^{n}(t-t_j),
\]
for some \(\xi_t\in\Lambda\). Since \(h=2/n\), we have
\[
|\omega_{n+1}(t)|
\le
\prod_{j=0}^{n}(j+1)h
=
(n+1)!\,h^{n+1}.
\]
Moreover, by the chain rule,
\[
g_k^{(n+1)}(t)=s_k^{\,n+1}f_k^{(n+1)}(c_k+s_k t),
\]
and hence
\[
\|g_k^{(n+1)}\|_{L^\infty(\Lambda)}
\le s_k^{\,n+1} C_f.
\]
Therefore,
\[
|g_k(t)-(L_n g_k)(t)|
\le s_k^{\,n+1} C_f\, h^{n+1}
\le \left(\frac{h_I}{2}\right)^{n+1} C_f \left(\frac{2}{n}\right)^{n+1}
=
\frac{h_I^{\,n+1}}{n^{\,n+1}}\,C_f.
\]
Transforming back from \(t\) to \(x\) yields the result.
\end{proof}

Lemma~\ref{lem:piecewise-interpolation} shows that, for fixed polynomial degree \(n\), the interpolation error decreases as the maximal subinterval length \(h_I\) decreases. This explains why interval subdivision improves local approximability. At the same time, the lemma concerns only approximation error in exact arithmetic. If one attempts to increase the local degree in direct equispaced interpolation, the corresponding interpolation process still suffers from the growth of the Lebesgue constant and hence from increasing numerical instability. This is precisely the point at which the frame formulation becomes useful.

\subsection{Construction of the local Legendre frame}

To obtain a numerically stable local approximation procedure, we replace direct local interpolation by a polynomial frame approximation on an extended interval. Let
\[
\widetilde{\Lambda}=[-T,T], \qquad T>1.
\]
Let \(\{p_\ell\}_{\ell\ge0}\) be the orthonormal Legendre basis on \([-1,1]\), normalized by
\[
\int_{-1}^1 p_\ell(y)p_j(y)\,dy=\delta_{\ell j},
\qquad \ell,j\ge0.
\]
We define the scaled Legendre system on \(\widetilde{\Lambda}\) by
\[
P_\ell^{(T)}(t)=\frac{1}{\sqrt{T}}\,p_\ell\!\left(\frac{t}{T}\right),
\qquad t\in[-T,T], \qquad \ell\ge0.
\]
A direct change of variables shows that
\[
\int_{-T}^T P_\ell^{(T)}(t)P_j^{(T)}(t)\,dt=\delta_{\ell j},
\]
so that \(\{P_\ell^{(T)}\}_{\ell\ge0}\) forms an orthonormal basis of \(L^2([-T,T])\).

For a fixed integer \(N\ge0\), define
\[
V_N^{(T)}=\mathrm{span}\{P_0^{(T)},P_1^{(T)},\dots,P_N^{(T)}\}.
\]
Restricting these functions from \([-T,T]\) to the reference interval \(\Lambda=[-1,1]\), we obtain the finite system
\[
\Phi_N^{(T)}=\{P_\ell^{(T)}|_{\Lambda}\}_{\ell=0}^N.
\]
Since the orthogonality is inherited from the larger interval \([-T,T]\) rather than from \(\Lambda\), the restricted system is no longer orthogonal on \(\Lambda\). Instead, it provides a redundant polynomial representation on \(\Lambda\), which we regard as a local Legendre frame.

We now introduce the associated synthesis operator
\[
\mathcal L_N^{(T)}:\mathbb C^{N+1}\to L^2(\Lambda),
\qquad
\mathcal L_N^{(T)}(\mathbf c)
=
\sum_{\ell=0}^{N} c_\ell P_\ell^{(T)}|_{\Lambda},
\]
where \(\mathbf c=(c_0,\dots,c_N)^T\in\mathbb C^{N+1}\).

Given a regularization parameter \(\epsilon>0\), let
\[
Q_{N,\epsilon}^{(T)}:L^2(\Lambda)\to V_N^{(T)}|_{\Lambda}
\]
denote the TSVD-regularized approximation operator associated with \(\mathcal L_N^{(T)}\). For each local function \(g_k\), we define its local Legendre frame approximation by
\[
Q_{N,\epsilon}^{(T)} g_k \in V_N^{(T)}|_{\Lambda},
\]
and then assemble the global approximation piecewise as
\[
(\mathcal P_{N,K}^{T,\epsilon}f)(x)
=
(Q_{N,\epsilon}^{(T)} g_k)\!\left(\frac{x-c_k}{s_k}\right),
\qquad x\in I_k.
\]

\subsection{A quasi-optimal estimate}

The following theorem gives the basic approximation estimate for the regularized local frame operator.

\begin{theorem}[Local quasi-optimality]
\label{thm:local-quasioptimal}
For each \(g\in L^2(\Lambda)\),
\[
\|g-Q_{N,\epsilon}^{(T)} g\|_{L^2(\Lambda)}
\le
\inf_{\mathbf c\in\mathbb C^{N+1}}
\left\{
\left\|g-\mathcal L_N^{(T)}(\mathbf c)\right\|_{L^2(\Lambda)}
+
\epsilon \|\mathbf c\|_2
\right\}.
\]
Consequently,
\[
\|f-\mathcal P_{N,K}^{T,\epsilon}f\|_{L^2(I)}^2
=
\sum_{k=1}^{K}
s_k\,\|g_k-Q_{N,\epsilon}^{(T)} g_k\|_{L^2(\Lambda)}^2.
\]
\end{theorem}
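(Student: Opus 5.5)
We first make the operator \(Q_{N,\epsilon}^{(T)}\) explicit at the continuous level and then argue through the singular value decomposition of \(\mathcal L_N^{(T)}\). Since \(\mathcal L_N^{(T)}:\mathbb C^{N+1}\to L^2(\Lambda)\) has finite rank, it has a singular system \(\{(\sigma_i,\mathbf v_i,u_i)\}_{i=0}^{N}\) with \(\mathcal L_N^{(T)}\mathbf v_i=\sigma_i u_i\). Here \(\{\mathbf v_i\}\) is an orthonormal basis of \(\mathbb C^{N+1}\) and the \(u_i\) are orthonormal in \(L^2(\Lambda)\). We interpret TSVD as
\[
\mathbf c_\epsilon=\sum_{\sigma_i>\epsilon}\frac{\langle g,u_i\rangle}{\sigma_i}\mathbf v_i ,
\qquad
Q_{N,\epsilon}^{(T)}g=\mathcal L_N^{(T)}\mathbf c_\epsilon=\sum_{\sigma_i>\epsilon}\langle g,u_i\rangle u_i .
\]
So \(Q_{N,\epsilon}^{(T)}\) is the orthogonal projection onto \(U_\epsilon:=\mathrm{span}\{u_i:\sigma_i>\epsilon\}\).

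Next, fix an arbitrary \(\mathbf c\in\mathbb C^{N+1}\) and split it as \(\mathbf c=\mathbf c_>+\mathbf c_\le\) along the two groups of singular vectors. Because \(Q_{N,\epsilon}^{(T)}\) is an orthogonal projection and \(\mathcal L_N^{(T)}\mathbf c_>\in U_\epsilon\), we have
\[
\|g-Q_{N,\epsilon}^{(T)}g\|\le\|g-\mathcal L_N^{(T)}\mathbf c_>\|\le\|g-\mathcal L_N^{(T)}\mathbf c\|+\|\mathcal L_N^{(T)}\mathbf c_\le\|.
\]
The second term is estimated by
\[
\|\mathcal L_N^{(T)}\mathbf c_\le\|^2=\sum_{\sigma_i\le\epsilon}\sigma_i^2|\langle\mathbf c,\mathbf v_i\rangle|^2\le\epsilon^2\|\mathbf c\|_2^2 .
\]
This gives the bound \(\|g-\mathcal L_N^{(T)}\mathbf c\|+\epsilon\|\mathbf c\|_2\). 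Taking the infimum over \(\mathbf c\) yields the first claim.

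The main obstacle is definitional: the paper only names \(Q_{N,\epsilon}^{(T)}\) as "the TSVD-regularized approximation operator". Any argument therefore depends on fixing this continuous-level meaning. In particular, TSVD must keep \(\sigma_i>\epsilon\) (or \(\ge\)), so that the discarded part has singular values at most \(\epsilon\). If the discrete sampling matrix were intended instead, the same argument would hold in the discrete norm. I would state the interpretation explicitly.

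For the second identity, the supports \(I_k\) overlap only at endpoints, which have measure zero. Hence \(\|f-\mathcal P f\|_{L^2(I)}^2=\sum_k\int_{I_k}|f-\mathcal Pf|^2dx\). On \(I_k\) the substitution \(x=c_k+s_kt\), \(dx=s_k\,dt\), turns \(f-\mathcal Pf\) into \(g_k-Q_{N,\epsilon}^{(T)}g_k\) by the definitions. This gives the weighted sum with weights \(s_k\).
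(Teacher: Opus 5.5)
Your proposal is correct and follows the paper's route. The paper simply invokes the standard quasi-optimality estimate for truncated-SVD frame approximation, and your argument is exactly the standard proof of that estimate: you identify the TSVD synthesis as the orthogonal projection onto the left singular vectors with \(\sigma_i>\epsilon\), split \(\mathbf c\) along the right singular vectors, and then use the same change of variables \(x=c_k+s_k t\) for the second identity. Your caveat is also accurate and is left implicit in the paper: the \(L^2(\Lambda)\) bound relies on reading \(Q_{N,\epsilon}^{(T)}\) as the continuous TSVD operator, whereas the discrete sampling-matrix version only yields the analogous bound in the discrete norm.
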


\begin{proof}
The first inequality is the standard quasi-optimal estimate for truncated-SVD frame approximation. The second identity follows from the change of variables
\(
x=c_k+s_k t
\)
on each subinterval \(I_k\).
\end{proof}

To connect this result with Lemma~\ref{lem:piecewise-interpolation}, we take the interpolation degree equal to the frame degree, namely \(n=N\), and set
\[
p_k=L_N g_k.
\]
Since \(p_k\in\mathbb P_N\subset V_N^{(T)}\), there exists \(\mathbf c_k^{\mathrm{int}}\in\mathbb C^{N+1}\) such that
\[
p_k=\mathcal L_N^{(T)}(\mathbf c_k^{\mathrm{int}}).
\]
Applying Theorem~\ref{thm:local-quasioptimal} with \(\mathbf c=\mathbf c_k^{\mathrm{int}}\) gives
\[
\|g_k-Q_{N,\epsilon}^{(T)} g_k\|_{L^2(\Lambda)}
\le
\|g_k-p_k\|_{L^2(\Lambda)}
+
\epsilon \|\mathbf c_k^{\mathrm{int}}\|_2.
\]
Thus the local frame approximation error is controlled by the interpolation error together with a regularization term involving the representing coefficient vector.

This yields the following corollary.

\begin{corollary}
\label{cor:interpolation-comparison}
Assume the hypotheses of Lemma~\ref{lem:piecewise-interpolation} with \(n=N\), and let
\[
p_k=L_N g_k.
\]
Then, for each \(k=1,\dots,K\), there exists \(\mathbf c_k^{\mathrm{int}}\in\mathbb C^{N+1}\) such that
\[
p_k=\mathcal L_N^{(T)}(\mathbf c_k^{\mathrm{int}}),
\]
and
\[
\|g_k-Q_{N,\epsilon}^{(T)} g_k\|_{L^2(\Lambda)}
\le
\|g_k-p_k\|_{L^2(\Lambda)}
+
\epsilon \|\mathbf c_k^{\mathrm{int}}\|_2.
\]
Consequently,
\[
\|g_k-Q_{N,\epsilon}^{(T)} g_k\|_{L^2(\Lambda)}
\le
\sqrt{2}\,\frac{h_I^{\,N+1}}{N^{\,N+1}}\,C_f
+
\epsilon \|\mathbf c_k^{\mathrm{int}}\|_2,
\]
and hence
\[
\|f-\mathcal P_{N,K}^{T,\epsilon}f\|_{L^2(I)}^2
\le
\sum_{k=1}^K
s_k
\left(
\sqrt{2}\,\frac{h_I^{\,N+1}}{N^{\,N+1}}\,C_f
+
\epsilon \|\mathbf c_k^{\mathrm{int}}\|_2
\right)^2.
\]
\end{corollary}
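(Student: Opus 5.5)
The proof has four steps, in this order: represent the interpolant in the frame, invoke Theorem~\ref{thm:local-quasioptimal}, bound the interpolation error using Lemma~\ref{lem:piecewise-interpolation}, and sum over subintervals.

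\textbf{Step 1: representing \(p_k\) in the frame.} The polynomial \(p_k=L_N g_k\) lies in \(\mathbb P_N\) on \(\Lambda\). The functions \(P_\ell^{(T)}(t)=T^{-1/2}p_\ell(t/T)\), \(\ell=0,\dots,N\), are polynomials of exact degree \(\ell\). Hence they form a basis of \(\mathbb P_N\), both on \([-T,T]\) and after restriction to \(\Lambda\). So there is a unique \(\mathbf c_k^{\mathrm{int}}\in\mathbb C^{N+1}\) with \(p_k=\mathcal L_N^{(T)}(\mathbf c_k^{\mathrm{int}})\). Concretely, \(\mathbf c_k^{\mathrm{int}}\) collects the \(L^2([-T,T])\) coefficients of the polynomial extension of \(p_k\), so its existence is pure linear algebra.

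\textbf{Step 2: the first inequality.} Take \(\mathbf c=\mathbf c_k^{\mathrm{int}}\) in the infimum of Theorem~\ref{thm:local-quasioptimal}. The infimum is at most the value at this \(\mathbf c\), which gives
\[
\|g_k-Q_{N,\epsilon}^{(T)}g_k\|_{L^2(\Lambda)}\le \|g_k-p_k\|_{L^2(\Lambda)}+\epsilon\|\mathbf c_k^{\mathrm{int}}\|_2 .
\]

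\textbf{Step 3: the explicit bound.} I pass from the pointwise bound to \(L^2(\Lambda)\). The proof of Lemma~\ref{lem:piecewise-interpolation} gives the pointwise estimate \(|g_k(t)-p_k(t)|\le h_I^{N+1}N^{-(N+1)}C_f\) for all \(t\in\Lambda\). Since \(|\Lambda|=2\),
\[
\|g_k-p_k\|_{L^2(\Lambda)}\le \sqrt{2}\,\|g_k-p_k\|_{L^\infty(\Lambda)}\le \sqrt2\,\frac{h_I^{N+1}}{N^{N+1}}C_f .
\]
Substituting this into the first inequality yields the second.

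\textbf{Step 4: the global bound.} Square the local bound and insert it into the identity
\[
\|f-\mathcal P_{N,K}^{T,\epsilon}f\|_{L^2(I)}^2=\sum_k s_k\|g_k-Q_{N,\epsilon}^{(T)}g_k\|_{L^2(\Lambda)}^2
\]
from Theorem~\ref{thm:local-quasioptimal}. This is valid because both sides of the local bound are nonnegative and \(s_k>0\).

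The main obstacle is not the chain of inequalities but whether Theorem~\ref{thm:local-quasioptimal} itself is applied legitimately. The operator \(Q_{N,\epsilon}^{(T)}\) is defined via TSVD of the continuous synthesis operator, while the data are equispaced samples. I would therefore take the theorem as stated, as the paper permits, and note that nothing beyond it is needed. Apart from this, the only point needing care is the \(\sqrt2\) factor converting \(L^\infty\) to \(L^2\) on \(\Lambda\).
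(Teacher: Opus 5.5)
Your proposal is correct and follows the paper's proof step for step. You obtain $\mathbf c_k^{\mathrm{int}}$ from $\mathbb P_N\subset V_N^{(T)}$ (your exact-degree basis argument just makes this inclusion explicit), take $\mathbf c=\mathbf c_k^{\mathrm{int}}$ in the infimum of Theorem~\ref{thm:local-quasioptimal}, convert the pointwise bound of Lemma~\ref{lem:piecewise-interpolation} to $L^2(\Lambda)$ via $\|\cdot\|_{L^2(\Lambda)}\le\sqrt2\,\|\cdot\|_{L^\infty(\Lambda)}$, and then square and sum using the change-of-variables identity.
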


\begin{proof}
The representation
\[
p_k=\mathcal L_N^{(T)}(\mathbf c_k^{\mathrm{int}})
\]
follows from \(p_k\in\mathbb P_N\subset V_N^{(T)}\). The first inequality is exactly the estimate above. Moreover,
\[
\|g_k-p_k\|_{L^2(\Lambda)}
\le
\sqrt{2}\,\|g_k-p_k\|_{L^\infty(\Lambda)},
\]
and Lemma~\ref{lem:piecewise-interpolation} gives
\[
\|g_k-p_k\|_{L^2(\Lambda)}
\le
\sqrt{2}\,\frac{h_I^{\,N+1}}{N^{\,N+1}}\,C_f.
\]
Substituting this into the previous bound yields the desired estimates.
\end{proof}

Corollary~\ref{cor:interpolation-comparison} shows that the interval partition improves local approximability through the factor \(h_I^{\,N+1}\), while the frame formulation introduces only the additional regularization term \(\epsilon\|\mathbf c_k^{\mathrm{int}}\|_2\). In this way, approximation quality and numerical stabilization are cleanly separated.

\begin{remark}
The interpolation polynomial is used only as a comparison function in the quasi-optimal estimate; it is not part of the computational procedure. This is precisely the advantage of the frame formulation: it retains the approximation power of local polynomials without relying on the numerical stability of direct equispaced interpolation.
\end{remark}
\section{Numerical implementation and parameter selection}
\label{sec:implementation}

In this section, we present the discrete implementation of the local Legendre frame (LLF) method and discuss the practical role of its main numerical parameters. After mapping each physical subinterval to the common reference interval \(\Lambda=[-1,1]\), all local approximation problems share the same discrete structure. Hence the sampling matrix and its truncated singular value decomposition (TSVD) can be precomputed once and reused on all subintervals.
\subsection{Discrete local approximation on the reference interval}

Let \(N\ge 0\) be the local polynomial degree, \(T\ge 1\) the extension parameter, and \(m\ge N+1\) the number of equispaced sample points on the reference interval \(\Lambda=[-1,1]\). In practice, we often write
\[
m=\lceil \gamma (N+1)\rceil,\qquad \gamma\ge 1,
\]
where \(\gamma\) denotes the oversampling ratio.

We define the reference nodes by
\[
t_j=-1+\frac{2j}{m-1},\qquad j=0,1,\dots,m-1.
\]
For a given subinterval
\[
I_k=[a_{k-1},a_k],\qquad
c_k=\frac{a_{k-1}+a_k}{2},\qquad
s_k=\frac{a_k-a_{k-1}}{2},
\]
the corresponding physical nodes are
\[
x_{k,j}=c_k+s_k t_j,\qquad j=0,1,\dots,m-1.
\]
The sampled local data are collected in the vector
\[
g_k=
\bigl(g_k(t_0),g_k(t_1),\dots,g_k(t_{m-1})\bigr)^T
=
\bigl(f(x_{k,0}),f(x_{k,1}),\dots,f(x_{k,m-1})\bigr)^T.
\]

Recall that the scaled Legendre frame on the extended interval \([-T,T]\) is given by
\[
P_\ell^{(T)}(t)=\frac{1}{\sqrt{T}}\,p_\ell\!\left(\frac{t}{T}\right),\qquad \ell\ge 0,
\]
where \(\{p_\ell\}_{\ell\ge0}\) denotes the orthonormal Legendre basis on \([-1,1]\). Restricting these functions to \(\Lambda\), we form the discrete sampling matrix
\[
A_{m,N}^{(T)}\in\mathbb{R}^{m\times (N+1)},\qquad
\bigl(A_{m,N}^{(T)}\bigr)_{j,\ell}
=
\frac{1}{\sqrt m}P_\ell^{(T)}(t_j),
\]
for \(j=0,\dots,m-1\) and \(\ell=0,\dots,N\).

The coefficient vector \(c_k\in\mathbb{R}^{N+1}\) of the local approximation
\[
q_k(t)=\sum_{\ell=0}^N c_{k,\ell}P_\ell^{(T)}(t)
\]
is determined from the least-squares system
\[
A_{m,N}^{(T)}c_k\approx \frac{1}{\sqrt m}g_k.
\]
Since the same reference nodes and the same local frame are used on every subinterval, the matrix \(A_{m,N}^{(T)}\) is independent of \(k\). This makes the method naturally suited to an offline--online implementation.

\subsection{TSVD regularization and reusable factorization}

As in polynomial frame approximation, the matrix \(A_{m,N}^{(T)}\) becomes increasingly ill-conditioned as \(N\) grows. Therefore, the coefficients are computed by truncated singular value decomposition. Let
\[
A_{m,N}^{(T)} = U\Sigma V^T,
\]
where
\[
\Sigma=\mathrm{diag}(\sigma_0,\sigma_1,\dots,\sigma_r),\qquad
\sigma_0\ge \sigma_1\ge \cdots \ge \sigma_r>0,
\]
and \(r=\min\{m-1,N\}\). For a prescribed truncation threshold \(\epsilon>0\), define
\[
\mathcal I_\epsilon=\{j:\sigma_j>\epsilon\}.
\]
Let \(U_\epsilon\) and \(V_\epsilon\) denote the matrices formed by the singular vectors corresponding to \(\mathcal I_\epsilon\), and let
\[
\Sigma_\epsilon^{-1}=\mathrm{diag}(\sigma_j^{-1})_{j\in\mathcal I_\epsilon}.
\]

Then the TSVD coefficient vector on the \(k\)-th subinterval is
\[
c_k^\epsilon
=
V_\epsilon \Sigma_\epsilon^{-1} U_\epsilon^T
\left(\frac{1}{\sqrt m}g_k\right).
\]
The corresponding local approximation is
\[
q_k^\epsilon(t)=\sum_{\ell=0}^N c_{k,\ell}^\epsilon P_\ell^{(T)}(t),\qquad t\in[-1,1],
\]
and the global approximation is assembled piecewise as
\[
\bigl(P_{N,K}^{T,\epsilon}f\bigr)(x)
=
q_k^\epsilon\!\left(\frac{x-c_k}{s_k}\right),\qquad x\in I_k.
\]

The implementation therefore has a natural offline--online structure.

\medskip
\noindent
\textbf{Offline stage.}
For prescribed \(T\), \(N\), \(m\), and \(\epsilon\),
\begin{enumerate}
\item construct the reference nodes \(\{t_j\}_{j=0}^{m-1}\);
\item evaluate the frame functions \(P_\ell^{(T)}(t_j)\);
\item assemble the matrix \(A_{m,N}^{(T)}\);
\item compute and store its singular value decomposition
\[
A_{m,N}^{(T)}=U\Sigma V^T;
\]
\item determine the retained singular modes \(\mathcal I_\epsilon\) and store the truncated factors
\[
U_\epsilon,\qquad \Sigma_\epsilon^{-1},\qquad V_\epsilon.
\]
\end{enumerate}

\medskip
\noindent
\textbf{Online stage.}
For each subinterval \(I_k\),
\begin{enumerate}
\item sample the local data \(g_k\) at the physical nodes \(\{x_{k,j}\}_{j=0}^{m-1}\);
\item compute
\[
\alpha_k
=
U_\epsilon^T\left(\frac{1}{\sqrt m}g_k\right);
\]
\item compute
\[
\beta_k=\Sigma_\epsilon^{-1}\alpha_k;
\]
\item compute
\[
c_k^\epsilon=V_\epsilon\beta_k;
\]
\item evaluate \(q_k^\epsilon\) and assemble the global approximation.
\end{enumerate}

\begin{remark}
Although the TSVD solution can be written formally as
\[
c_k^\epsilon
=
V\Sigma_\epsilon^\dagger U^T
\left(\frac{1}{\sqrt m}g_k\right),
\]
we do \emph{not} explicitly form the dense product \(V\Sigma_\epsilon^\dagger U^T\) in the offline stage. The reason is numerical rather than algebraic: the projected coefficients
\[
U_\epsilon^T\left(\frac{1}{\sqrt m}g_k\right)
\]
typically decay with the singular modes, and only after this projection is it numerically safe to apply the amplification by \(\Sigma_\epsilon^{-1}\). If one precomputes the full product \(V\Sigma_\epsilon^\dagger U^T\), the large reciprocals of small singular values are introduced too early, which may noticeably deteriorate finite-precision accuracy. For this reason, the online computation is carried out in the ordered form
\[
U_\epsilon^T g_k
\;\rightarrow\;
\Sigma_\epsilon^{-1}(U_\epsilon^T g_k)
\;\rightarrow\;
V_\epsilon \Sigma_\epsilon^{-1} U_\epsilon^T g_k.
\]
\end{remark}

\subsection{Choice of the extension parameter \(T\)}

We next examine the role of the extension parameter \(T\). As in the preceding discussion, we consider the oscillatory test function
\[
f(x)=\sin(\omega x),\qquad x\in[-1,1],
\]
and measure the approximation error on a refined grid. Throughout this subsection, the truncation threshold is fixed at
\[
\epsilon=10^{-14}.
\]

In practice, due to the accumulation of rounding errors, it is difficult to consistently reach the level \(10^{-14}\) for general functions. Therefore, in determining the admissible range of \(T\), we adopt the more realistic tolerance
\[
5\times 10^{-13}
\]
as the target accuracy.

Figure~\ref{fig:Ttest} displays the approximation error as a function of \(T\) under different choices of \(N\), \(K\), \(\omega\), and \(\gamma\). The numerical results reveal a clear stable operating interval
\[
[T_1,T_2],
\]
within which the approximation error remains close to the target tolerance, while outside this interval the error increases rapidly.

The first observation is that the lower threshold \(T_1\) is governed mainly by the oversampling ratio \(\gamma\). Table~\ref{tab:T1} lists the numerically observed values of \(T_1\) required to reach the accuracy level \(5\times10^{-13}\) for several choices of \(\gamma\). In particular, for the most economical choice
\[
\gamma=1,
\]
we observe that
\[
T_1\approx 5.6.
\]
Since the threshold depends mildly on the target tolerance and finite-precision effects, it is natural in practice to choose \(T\) slightly above this value. Accordingly, we adopt
\[
T=6
\]
in subsequent computations.
\begin{figure}[htbp]
\begin{center}
\subfigure[$\omega=40$, $K=4$, $N=150$]{
\resizebox*{6.5cm}{!}{\includegraphics{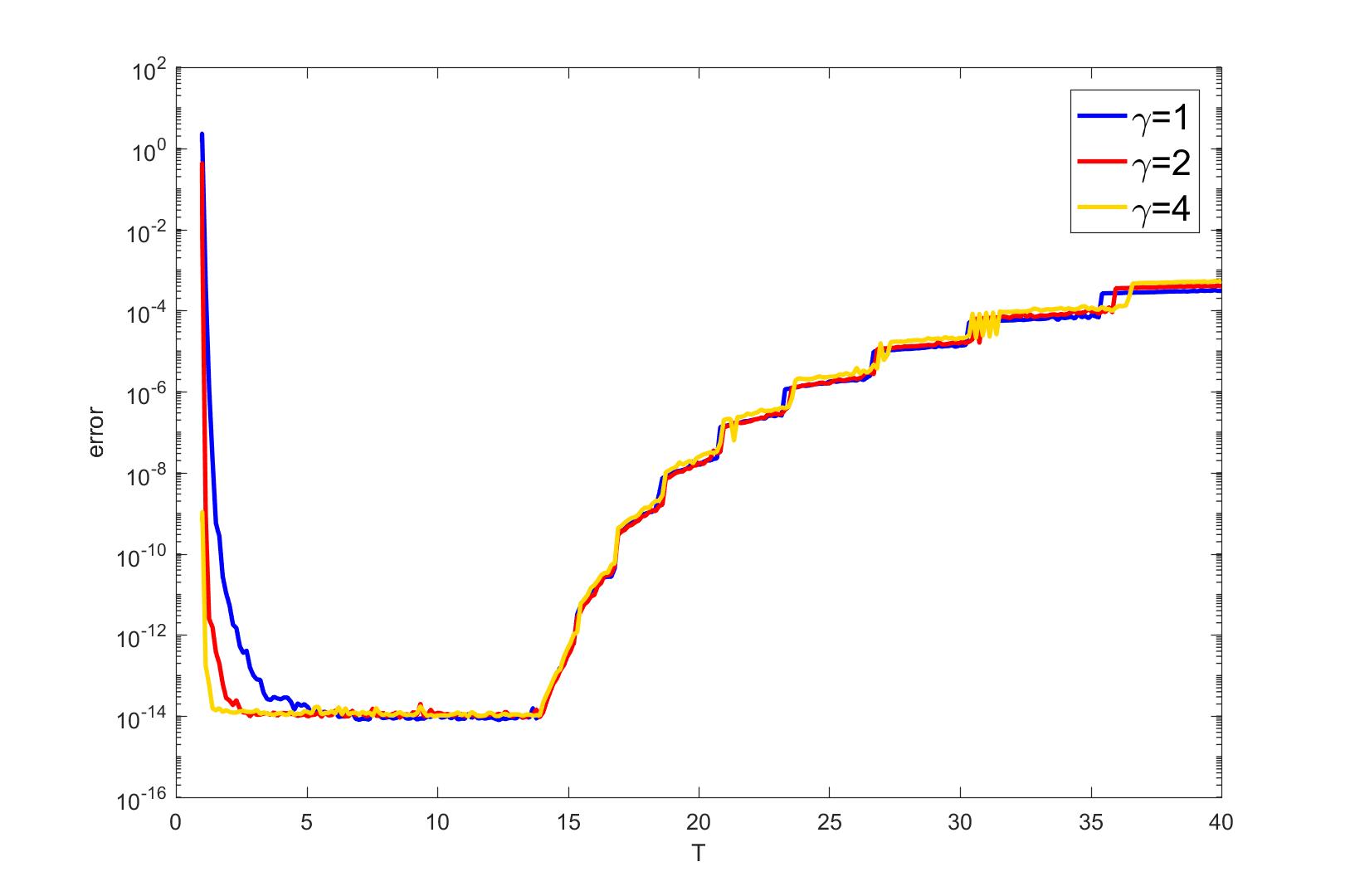}}
}%
\subfigure[$\omega=40$, $K=4$, $\gamma=1$]{
\resizebox*{6.5cm}{!}{\includegraphics{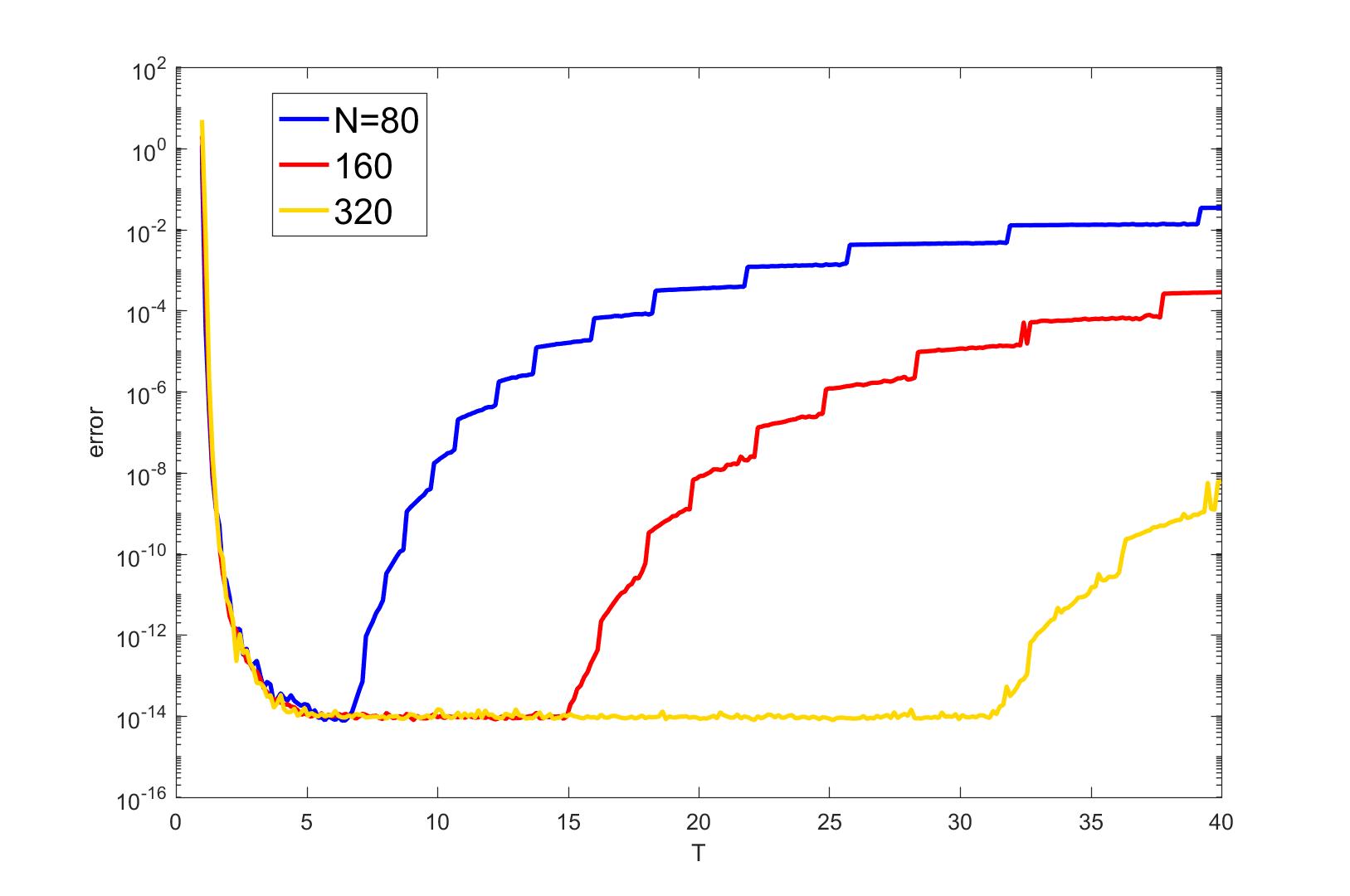}}
}%

\subfigure[$\omega=40$, $\gamma=1$, $N=150$]{
\resizebox*{6.5cm}{!}{\includegraphics{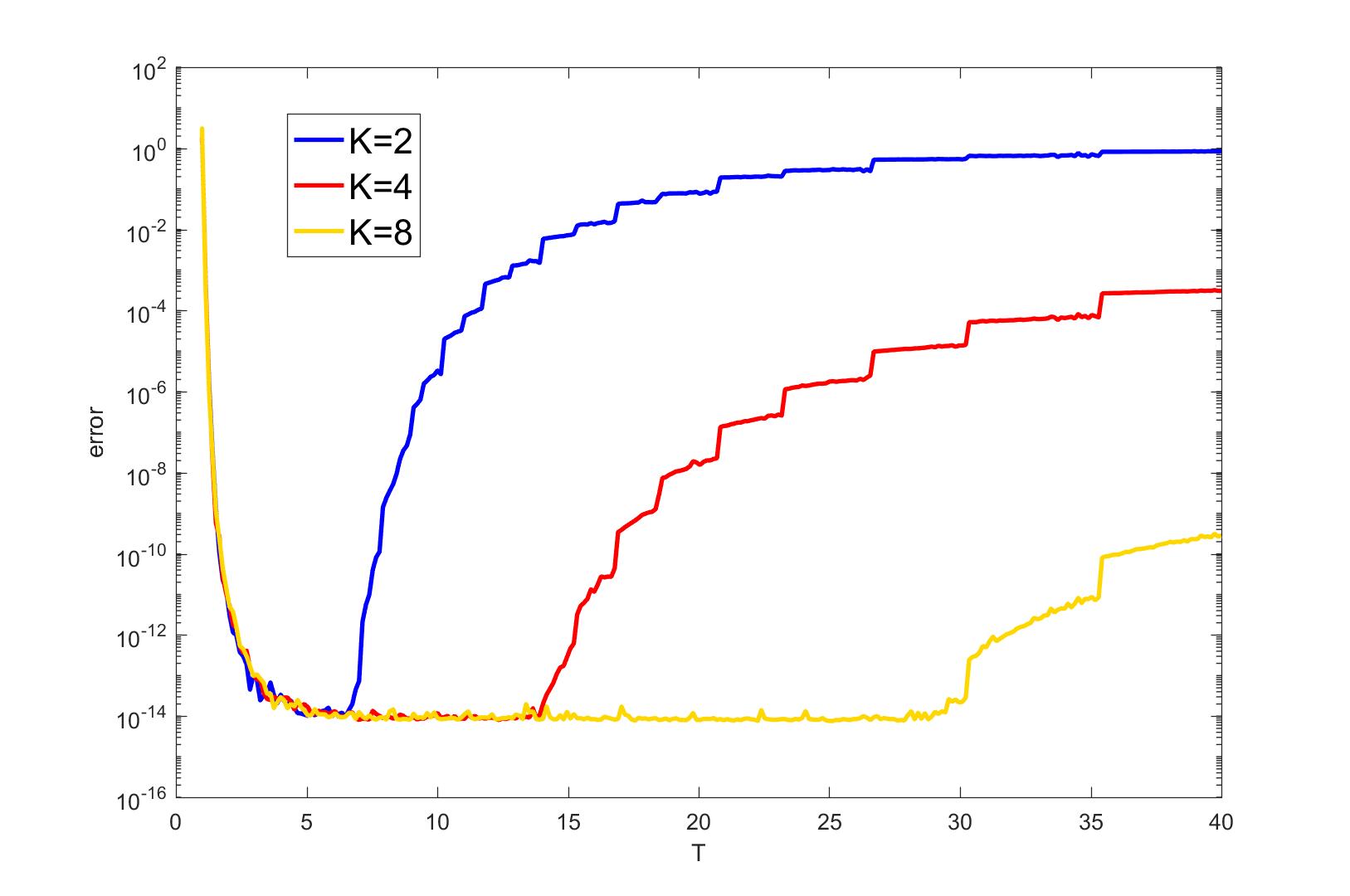}}
}%
\subfigure[$\gamma=1$, $K=4$, $N=150$]{
\resizebox*{6.5cm}{!}{\includegraphics{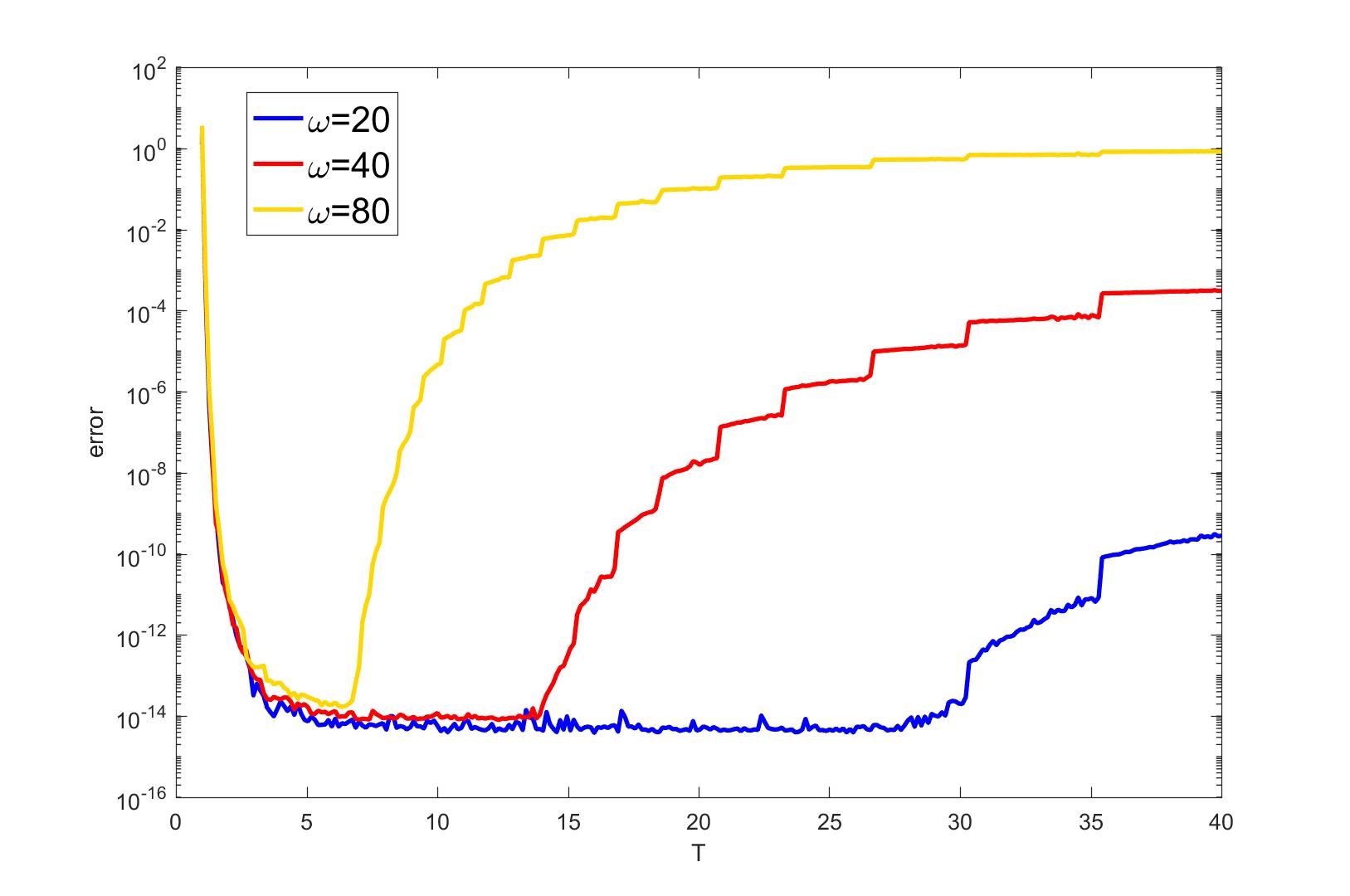}}
}%

\caption{Approximation error as a function of the extension parameter \(T\).
Panel (a) varies the oversampling ratio \(\gamma\), panel (b) varies the local degree \(N\), panel (c) varies the number of subintervals \(K\), and panel (d) varies the frequency \(\omega\). The results show that the admissible range of \(T\) is governed mainly by \(\gamma\), while the upper bound \(T_2\) depends on the interplay between \(N\), \(K\), and \(\omega\). Once \(T\) lies in the stable range, its influence on the approximation error becomes mild.}
\label{fig:Ttest}
\end{center}
\end{figure}

The second observation concerns the upper threshold \(T_2\). From Figure~\ref{fig:Ttest}, one can see that \(T_2\) increases with the local degree \(N\) and the number of subintervals \(K\), and decreases with the oscillation frequency \(\omega\). Numerically, we observe that
\[
T_2 \approx \frac{N K}{\omega}.
\]
This behavior can be understood heuristically as follows. Increasing \(T\) enlarges the effective extension domain, which reduces the resolution of the restricted frame on \([-1,1]\). When \(T\) becomes too large relative to the effective local oscillatory complexity (measured by \(\omega\)) and the available degrees of freedom (measured by \(N\) and \(K\)), the approximation loses its ability to resolve the oscillations accurately, leading to a rapid growth of the error. Thus, the ratio \(NK/\omega\) provides a natural upper bound for admissible values of \(T\).

Finally, we note that, once \(T\) lies within the stable range \([T_1,T_2]\), further variation of \(T\) has little influence on the degree \(N\) required to attain the target accuracy; see also Figure~\ref{fig:Ntest}(b). This indicates that \(T\) acts primarily as a stability parameter rather than as the main resolution parameter. Since \(T\) does not materially affect the online cost, it is reasonable to fix it slightly above the lower edge of the admissible interval. For this reason, we take
\[
T=6
\]
throughout the remainder of the paper.

\begin{table}[htbp]
\begin{center}
\caption{Numerically observed values of \(T_1\) required to reach the accuracy level \(5\times10^{-13}\) for \(f(x)=\sin(\omega x)\) on \([-1,1]\).}
\label{tab:T1}
\small
{\begin{tabular*}{\textwidth}{@{\extracolsep\fill}cccccccccccccc} \toprule
&$\gamma=1$&$\gamma=2$&$\gamma=3$&$\gamma=4$&$\gamma=5$\\\midrule
$T_1$ &5.6&2.6&1.8&1.6&1.3\\
\bottomrule
\end{tabular*}}
\end{center}
\end{table}

\subsection{Dependence on the local degree \(N\)}

We now study the role of the local degree \(N\). As in the previous subsection, we assess accuracy with respect to the practical tolerance
\[
5\times 10^{-13},
\]
which better reflects attainable accuracy in finite precision.

The numerical results show that the value of \(N\) required to reach this accuracy level is governed primarily by the local oscillatory complexity, and is essentially independent of the oversampling ratio \(\gamma\); see Figure~\ref{fig:Ntest}(b). By contrast, increasing \(\gamma\) enlarges the number of sampling points
\[
m=\lceil \gamma (N+1)\rceil,
\]
and hence increases the online computational cost without reducing the degree threshold in any essential way.

To quantify this dependence, we introduce the local frequency \(\omega_\Delta\) on each subinterval. For a function with global frequency \(\omega\), subdivision reduces the effective local oscillation, so that the required local degree is determined by \(\omega_\Delta\), rather than by \(\omega\) itself. This is consistent with the general mechanism observed in local Fourier extension: interval subdivision lowers the local oscillatory complexity and thereby reduces the approximation difficulty.

Table~\ref{tab:NCw} reports the numerically observed values of \(m\) and \(C_\Delta\) for different local frequencies \(\omega_\Delta\) in the case \(\gamma=1\). Here \(C_\Delta\) denotes the number of retained singular values after TSVD truncation. Since the total number of nodes on the whole interval is
\[
K(m-1)+1,
\]
and the online computational cost is proportional to
\[
C_\Delta m K,
\]
a natural indicator for comparing different parameter choices is
\[
\frac{mC_\Delta}{\omega_\Delta}.
\]

The last row of Table~\ref{tab:NCw} shows that this quantity remains within a moderate range once \(\omega_\Delta\) is not too small. In particular, the cases
\[
N=15\quad (\omega_\Delta=1),\qquad N=39\quad (\omega_\Delta=4)
\]
lead to comparable values of \(mC_\Delta/\omega_\Delta\). Their practical implications, however, are different. For low-frequency functions, the required total number of nodes is often well below \(40\), so choosing a large value such as \(N=39\) would be unnecessary. Hence, for low-to-moderate local frequencies, a natural default choice is
\[
\gamma=1,\qquad N=15,\qquad m=15.
\]

For highly oscillatory functions, the situation changes. Although \(N=15\) and \(N=39\) lead to comparable cost indicators at the target accuracy, the ratio
\[
\frac{N}{\omega_\Delta}
\]
is smaller when \(N=39\). This implies that, for a given global frequency, fewer subintervals are required, and hence the total number of nodes on the whole interval may be reduced. Therefore, for high-frequency problems it is advantageous to use a larger local degree, and a practical choice is
\[
\gamma=1,\qquad N=m=40.
\]

We now combine these observations with the discussion in Section~3.3. The admissible range of \(T\) is controlled by the oversampling ratio \(\gamma\), and a smaller value of \(T\) is desirable since it enlarges the effective resolution ratio \(NK/T\), allowing each subinterval to represent higher-frequency components. However, decreasing \(T\) requires a larger \(\gamma\) to maintain stability, as reflected by the dependence of \(T_1\) on \(\gamma\). Consequently, the relevant quantity is not \(T_1\) or \(\gamma\) alone, but their product.

From Table~\ref{tab:T1}, we observe that the product \(T_1\gamma\) remains essentially comparable for \(\gamma=1\) and \(\gamma=2\), while it becomes larger for higher values of \(\gamma\). This indicates that increasing \(\gamma\) does not provide a net advantage when both stability and resolution are taken into account. Therefore, from the viewpoint of overall efficiency, it is natural to adopt the simplest choice
\[
\gamma=1,
\]
and to control resolution primarily through \(N\) and the subdivision parameter \(K\).

These observations clarify the distinct roles of the main parameters:
\begin{itemize}
\item \(T\) is chosen mainly for stability;
\item \(N\) controls local resolution;
\item increasing \(K\) reduces the required local degree by decreasing the effective local frequency \(\omega_\Delta\);
\item \(\gamma\) mainly controls the sampling redundancy and thus affects the computational cost, while its interaction with \(T\) is reflected through the product \(T_1\gamma\).
\end{itemize}

\begin{figure}[htbp]
\begin{center}
\subfigure[$\omega=60$, $K=4$, $N=120$]{
\resizebox*{6cm}{!}{\includegraphics{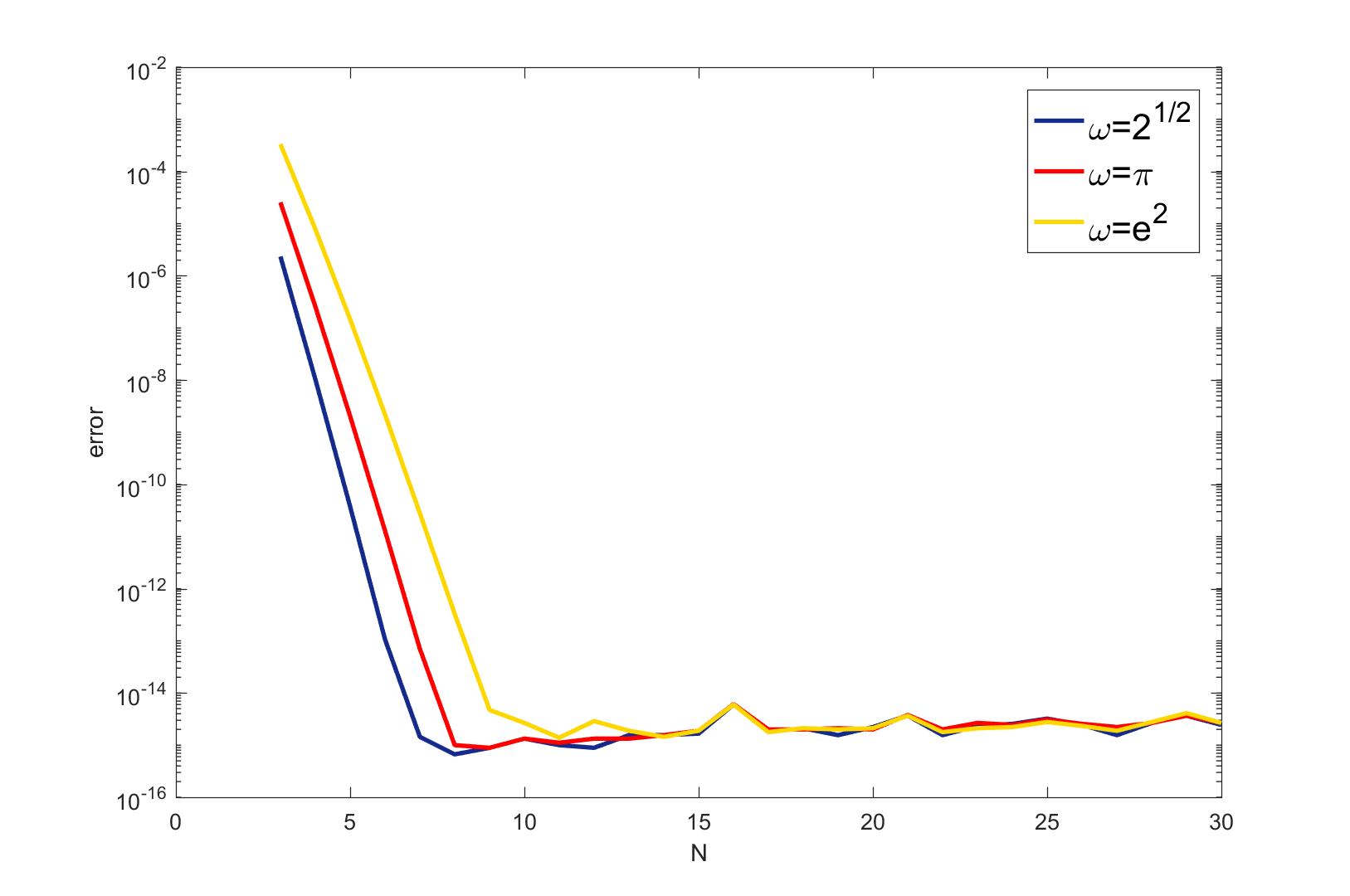}}
}%
\subfigure[$\omega=60$, $K=4$, $\gamma=1$]{
\resizebox*{6cm}{!}{\includegraphics{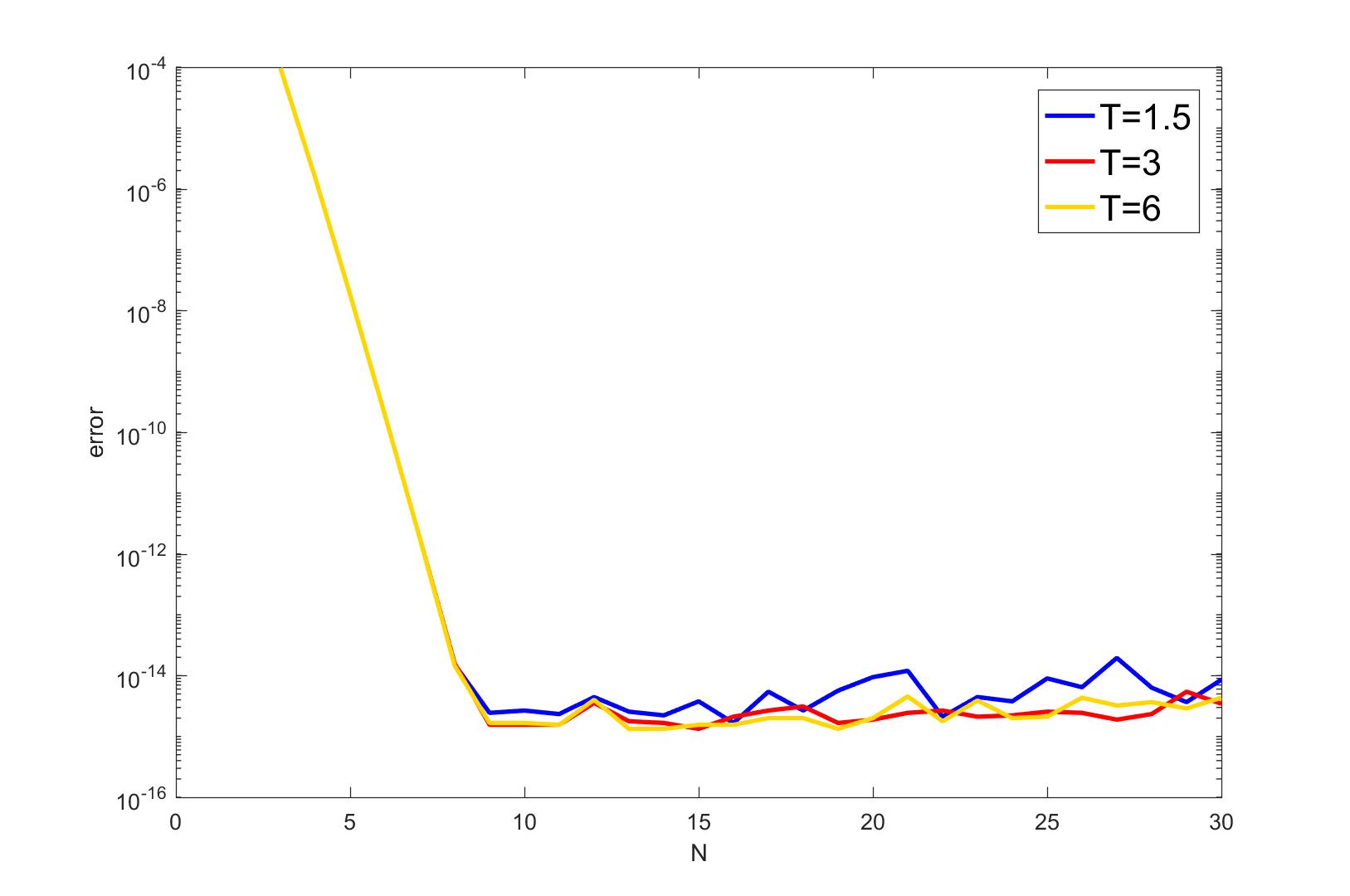}}
}

\subfigure[$\omega=20$, $\gamma=8$, $K=20$]{
\resizebox*{6cm}{!}{\includegraphics{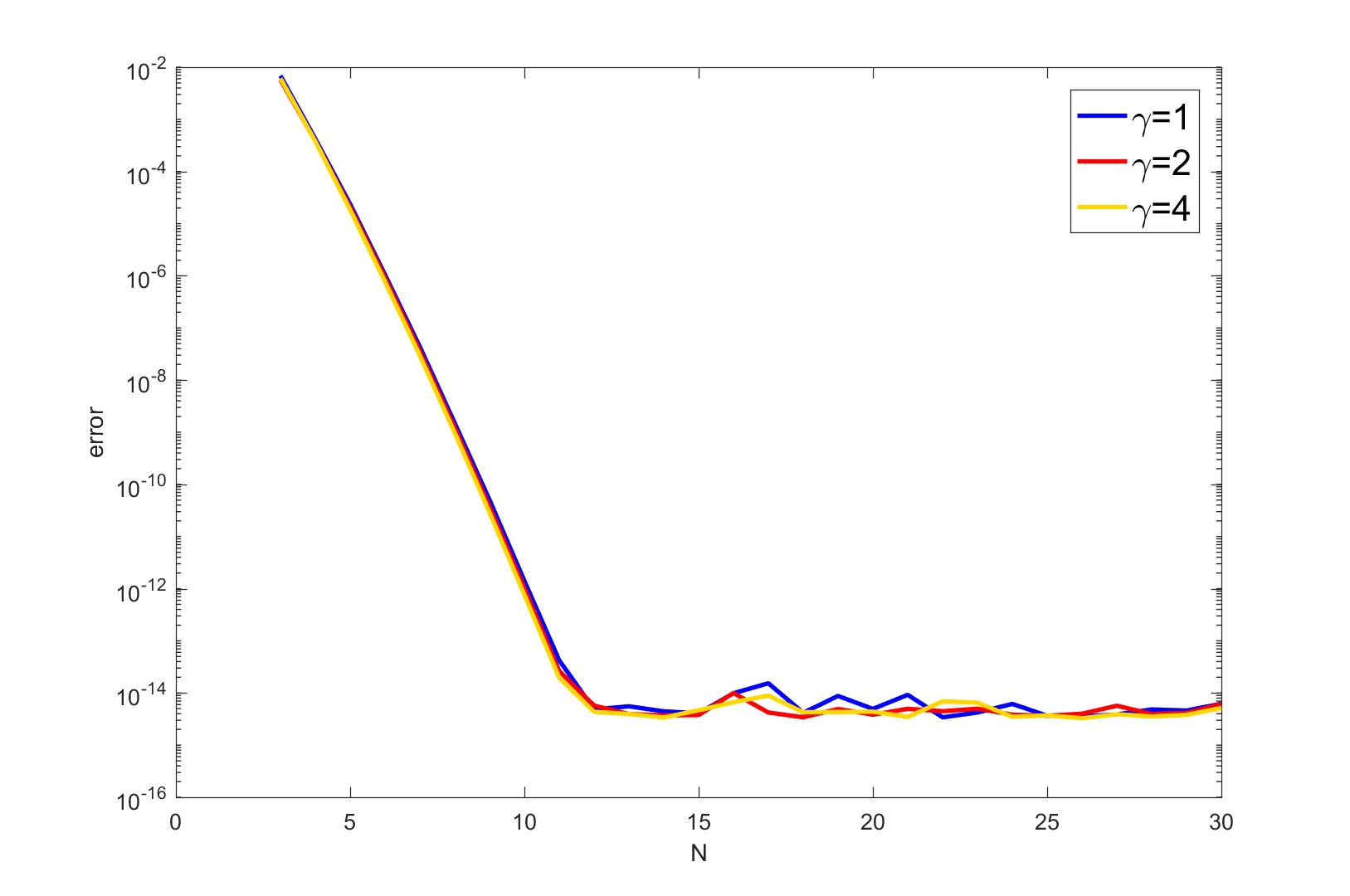}}
}%
\subfigure[$\gamma=4$, $T=8$, $\omega=20$]{
\resizebox*{6cm}{!}{\includegraphics{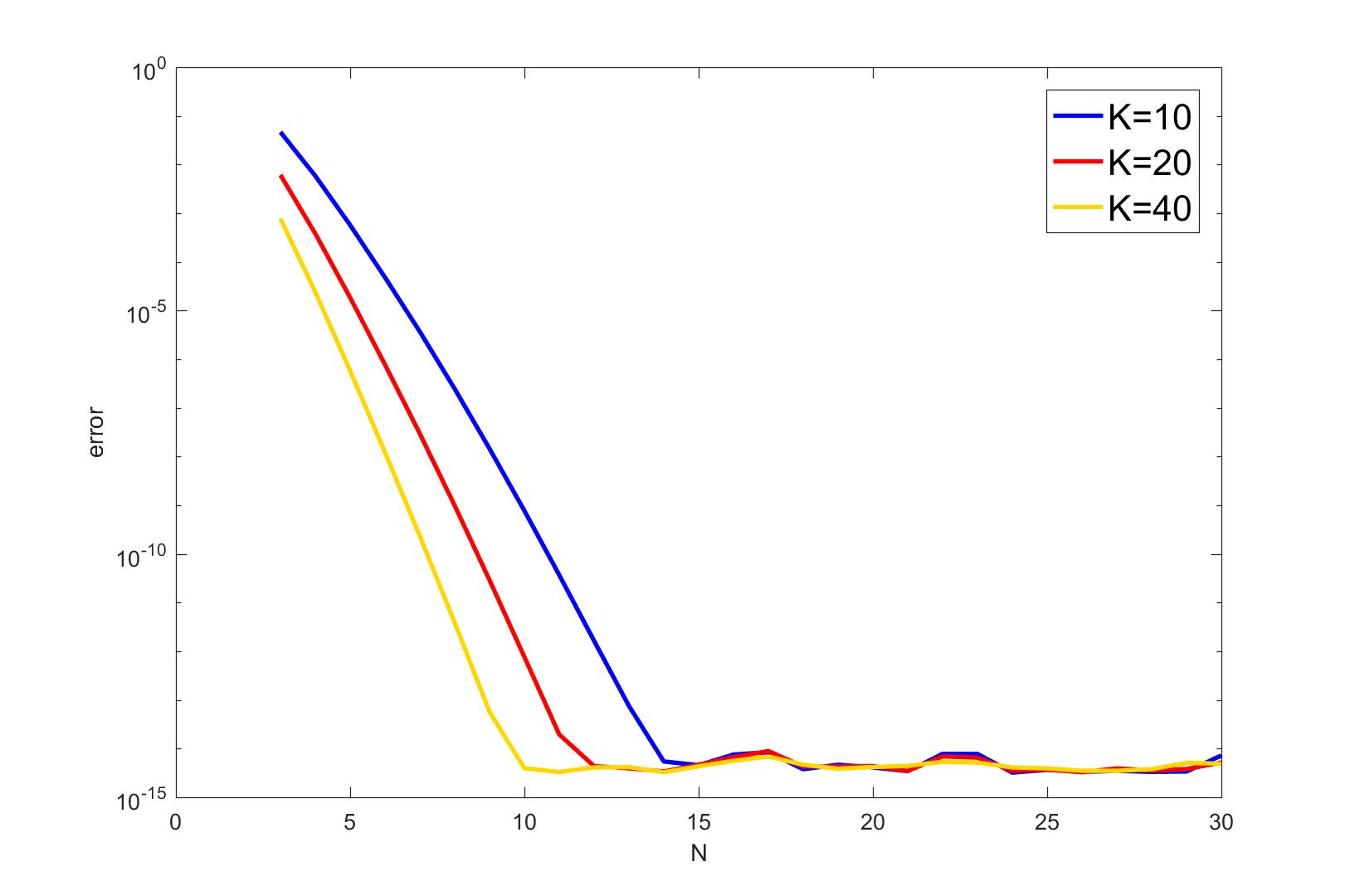}}
}

\caption{Approximation error as a function of the local degree \(N\).
Panel (a) compares different values of the extension parameter \(T\), panel (b) compares different oversampling ratios \(\gamma\), panel (c) compares different numbers of subintervals \(K\), and panel (d) compares different frequencies \(\omega\). The results indicate that the degree required to attain high accuracy is determined mainly by the effective local oscillatory complexity, while its dependence on \(T\) and \(\gamma\) is comparatively weak once these parameters are chosen in a suitable range.}
\label{fig:Ntest}
\end{center}
\end{figure}

\begin{table}[htbp]
\begin{center}
\caption{Numerically observed values of \(m\) and \(C_\Delta\) for different local frequencies \(\omega_\Delta\) when \(\gamma=1\).}
\label{tab:NCw}
\small
{\begin{tabular*}{\textwidth}{@{\extracolsep\fill}cccccccccccccc} \toprule
&$\omega_{\Delta}=0.1$&$\omega_{\Delta}=0.25$&$\omega_{\Delta}=0.5$&$\omega_{\Delta}=1$
&$\omega_{\Delta}=2$&$\omega_{\Delta}=4$&$\omega_{\Delta}=8$&$\omega_{\Delta}=16$\\\midrule
$m$&$9$&$11$&$13$&$15$&$26$&$39$&$65$&$116$\\
$C_{\Delta}$&$9$&$11$&$13$&$13$&$17$&$20$&$26$&$35$\\
$\frac{m C_{\Delta}}{\omega_{\Delta}}$&$810$&$484$&$338$&$195$&$221$&$195$&$211.25$&$253.75$\\
\bottomrule
\end{tabular*}}
\end{center}
\end{table}

\subsection{Computational complexity}

We briefly analyze the computational complexity of the proposed LLF method. A key feature is that, once the local discretization parameters are fixed, all subintervals share the same reference matrix and its truncated singular value decomposition (TSVD). Hence the matrix construction and factorization are performed only once in an offline stage.

The offline cost consists of assembling the matrix \(A_{m,N}^{(T)}\) and computing its SVD, which requires \(O(mN)\) and \(O(mN\min\{m,N\})\) operations, respectively. Since \(m\) and \(N\) are local parameters, this cost is independent of the number of subintervals.

For each subinterval, the online computation involves projection, scaling, and reconstruction in the truncated SVD space. If \(C_\Delta\) denotes the number of retained singular values, the cost per subinterval is
\[
O(C_\Delta m + C_\Delta N),
\]
and the total online cost is therefore
\[
O\bigl(K C_\Delta m\bigr).
\]

Since typically \(C_\Delta \ll N\), the TSVD truncation improves both stability and efficiency. Together with the total number of nodes \(K(m-1)+1\), this explains the relevance of the indicator \(mC_\Delta/\omega_\Delta\) in parameter selection. This linear scaling with respect to the number of subintervals is a key advantage of the proposed local formulation.

\subsection{Practical parameter choice}

We summarize a practical parameter strategy based on the preceding analysis and numerical results. Throughout, we target the accuracy level \(5\times10^{-13}\), which reflects attainable precision in finite arithmetic.

First, the extension parameter \(T\) is chosen mainly for stability. From Table~\ref{tab:T1}, the admissible lower bound \(T_1\) depends on the oversampling ratio \(\gamma\), and for \(\gamma=1\) we have \(T_1\approx 5.6\). We therefore adopt
\[
T=6
\]
as a robust default.

Second, the oversampling ratio \(\gamma\) primarily controls sampling redundancy. Since increasing \(\gamma\) enlarges the number of sampling points without significantly reducing the required degree \(N\), and since the product \(T_1\gamma\) does not improve for larger \(\gamma\), the most efficient choice is
\[
\gamma=1.
\]

Third, the local degree \(N\) is determined by the effective local frequency \(\omega_\Delta\). For low-to-moderate oscillations, the choice
\[
N=15,\qquad m=15
\]
already provides reliable high accuracy. For highly oscillatory functions, a larger degree such as
\[
N=m=40
\]
is preferable, as it reduces the required number of subintervals and hence the total number of nodes.

In summary, we recommend the parameter choice
\[
\gamma=1,\qquad T=6,
\]
together with
\[
N=15 \ \text{for low-to-moderate local frequencies},\qquad
N=40 \ \text{for highly oscillatory cases}.
\]
This simple rule provides an effective guideline for implementing the LLF method in practice.

\section{Numerical Experiments}\label{sec:numerics}

We present numerical experiments to assess the approximation properties of the proposed local Legendre frame (LLF) method. The tests include relatively smooth functions, highly oscillatory functions, comparisons under a fixed sampling budget, and continuous piecewise smooth functions with derivative singularities. Comparisons with the local Fourier extension (LFE) method are included throughout.

\subsection{Experimental setting}

Unless otherwise stated, LLF uses
\[
\gamma=1,\qquad T=6.
\]
For relatively smooth functions we take
\[
m=15,
\]
while for oscillatory functions we test
\[
m=15 \quad\text{and}\quad m=40,
\]
corresponding to \(N=15\) and \(N=40\).

For LFE we use
\[
m=21,\qquad \gamma=1,\qquad T=6.
\]

Let
\[
M=K(m-1)+1
\]
be the total number of sampling points. The error is measured by
\[
E_M=\max_{x\in X_{10M}} |f(x)-f_M(x)|,
\]
where \(X_{10M}\) is a refined grid with approximately \(10M\) points.

\subsection{Relatively smooth functions}
We consider the following smooth test functions:
\begin{align}
f_1(x)&=\frac{\sqrt{x}}{x^2+\frac{x}{2}+1}, \qquad x\in[1,15],\\
f_2(x)&=\frac{1}{4-x^2}, \qquad x\in[-1,1],\\
f_3(x)&=\mathrm{erf}\!\left(\frac{x}{3}\right), \qquad x\in[-4,4].
\end{align}
\begin{figure}[htbp]
\begin{center}
\subfigure[$f_1(x)$]{
\resizebox*{6cm}{!}{\includegraphics{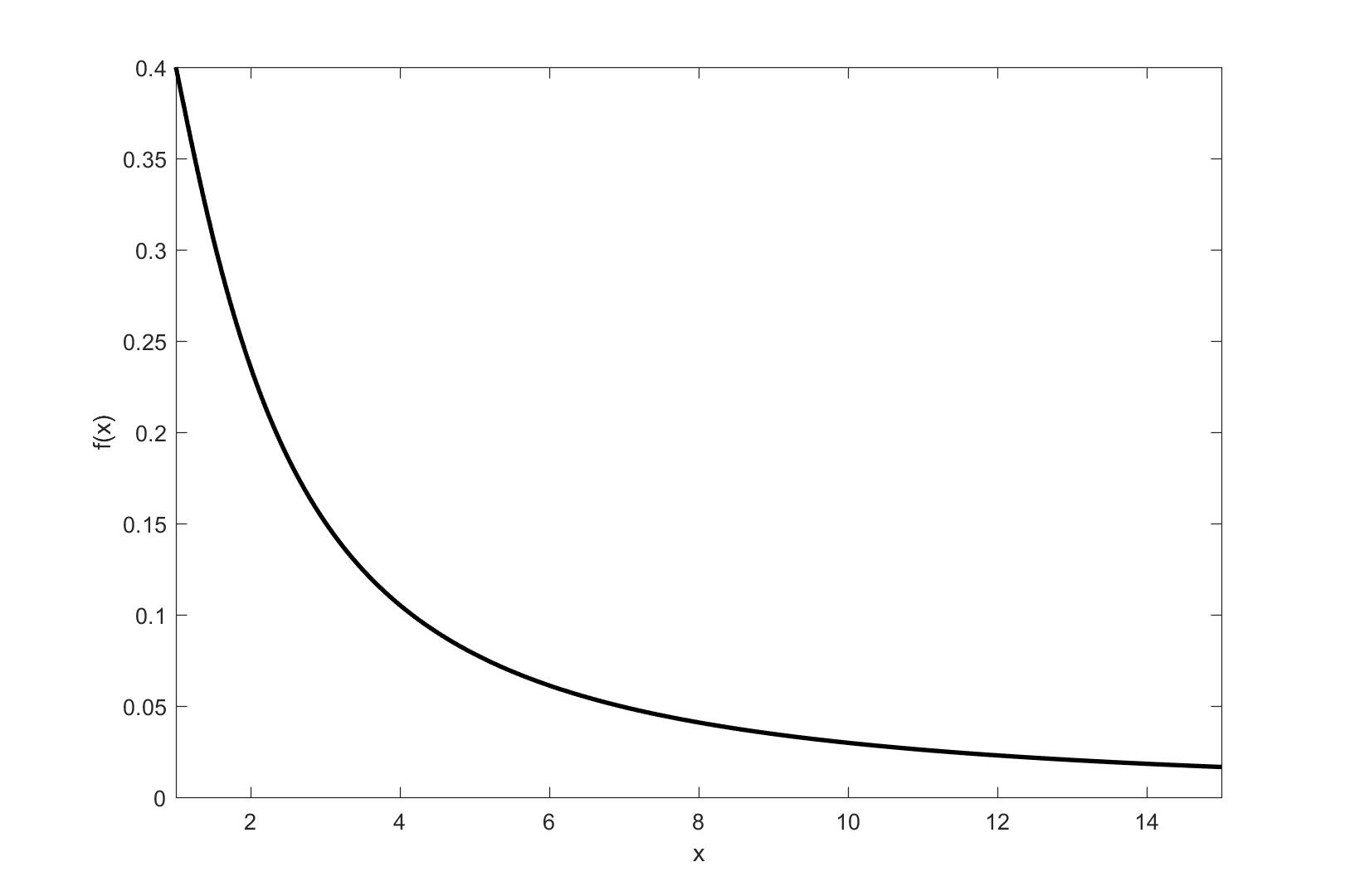}}
}%
\subfigure[Approximation error for \(f_1\)]{
\resizebox*{6cm}{!}{\includegraphics{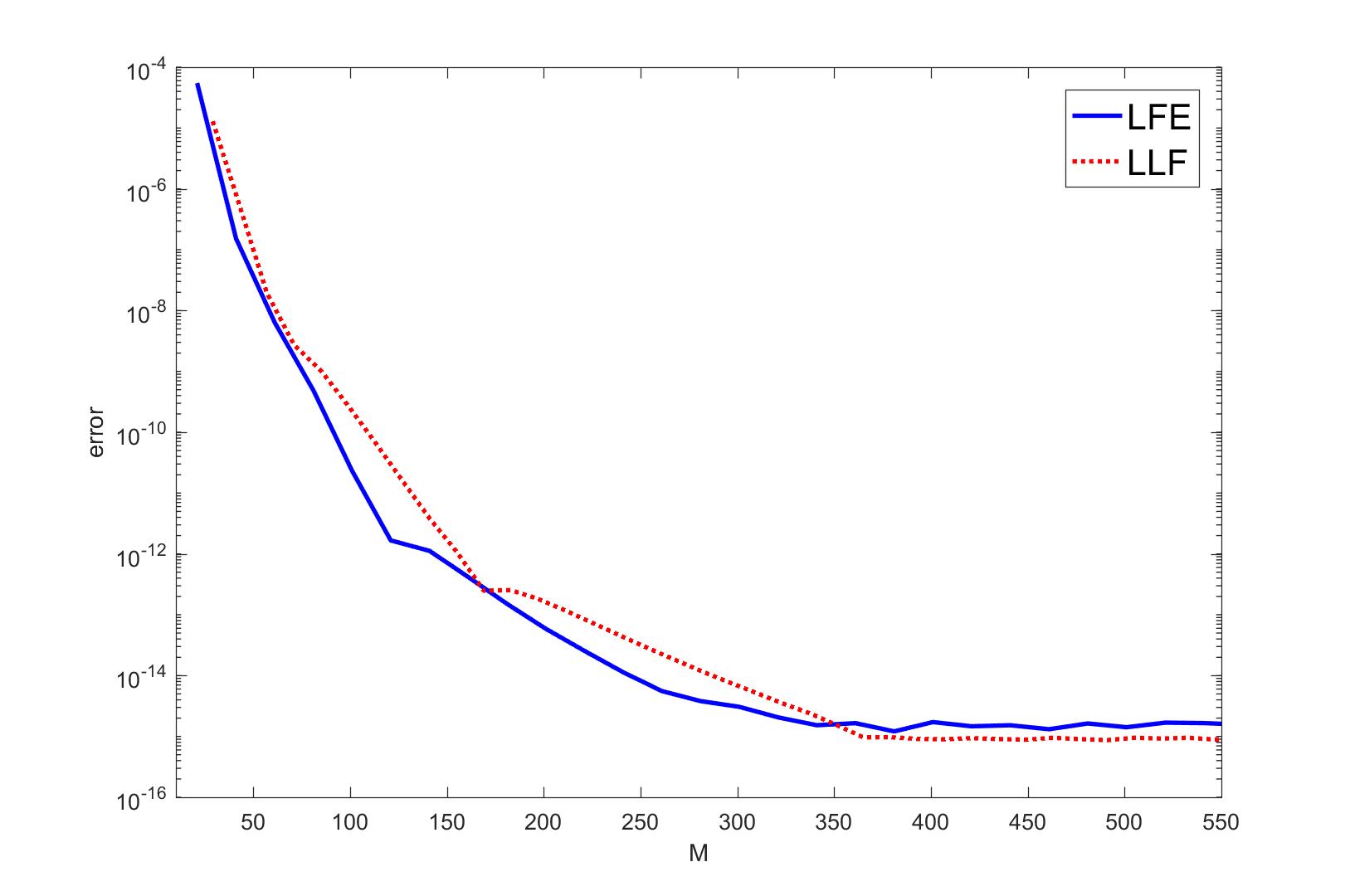}}
}

\subfigure[$f_2(x)$]{
\resizebox*{6cm}{!}{\includegraphics{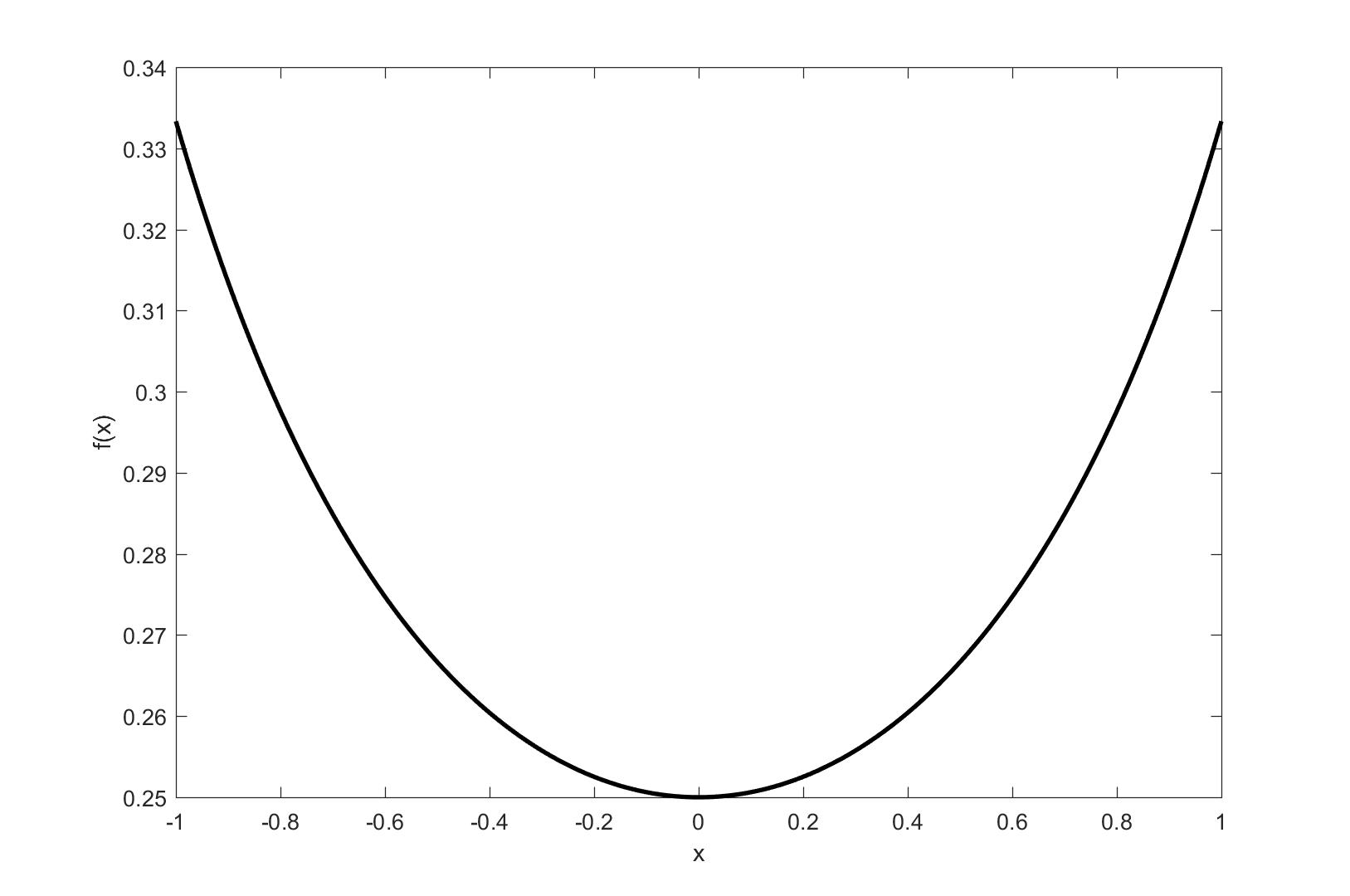}}
}%
\subfigure[Approximation error for \(f_2\)]{
\resizebox*{6cm}{!}{\includegraphics{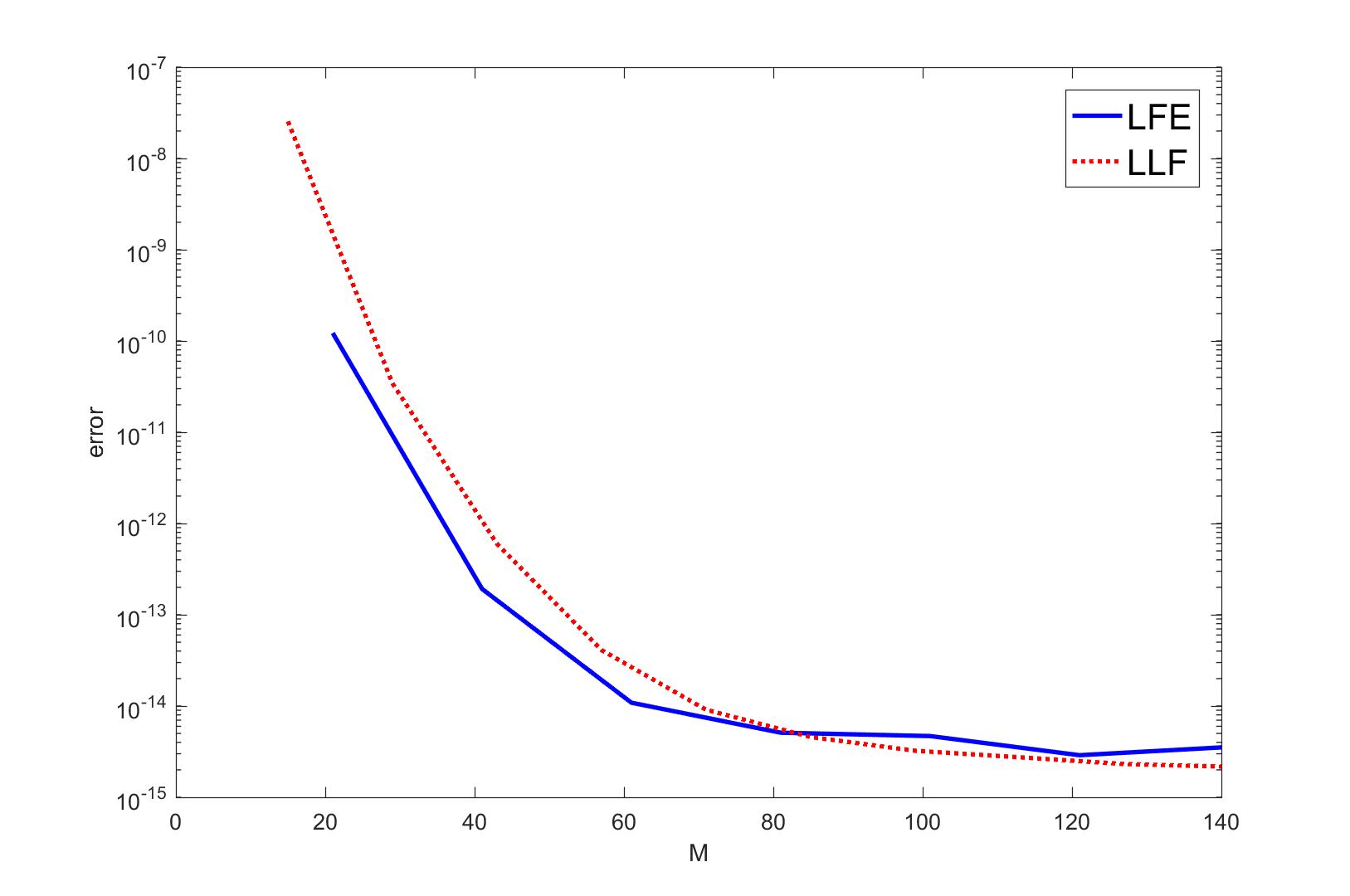}}
}

\subfigure[$f_3(x)$]{
\resizebox*{6cm}{!}{\includegraphics{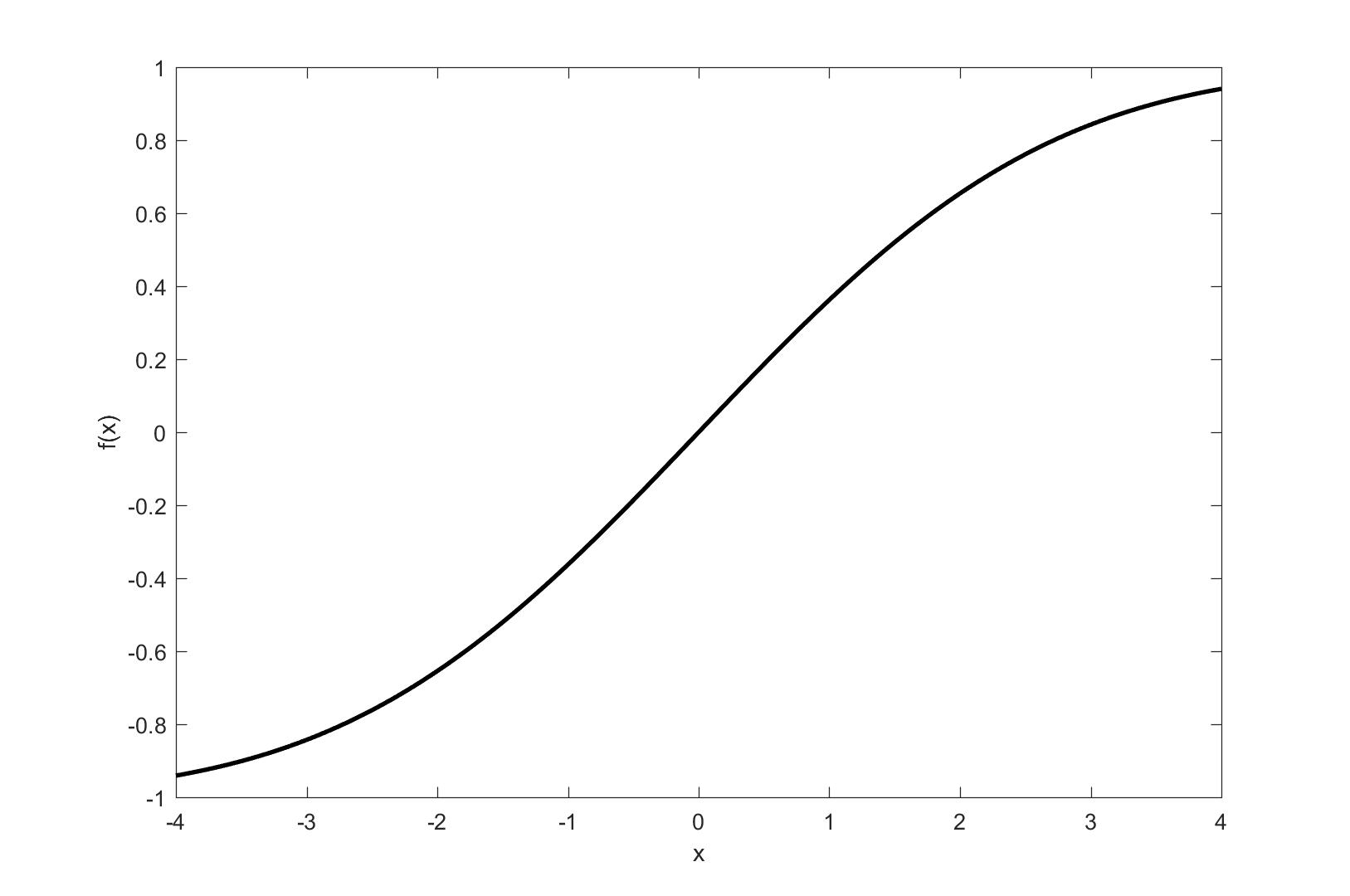}}
}%
\subfigure[Approximation error for \(f_3\)]{
\resizebox*{6cm}{!}{\includegraphics{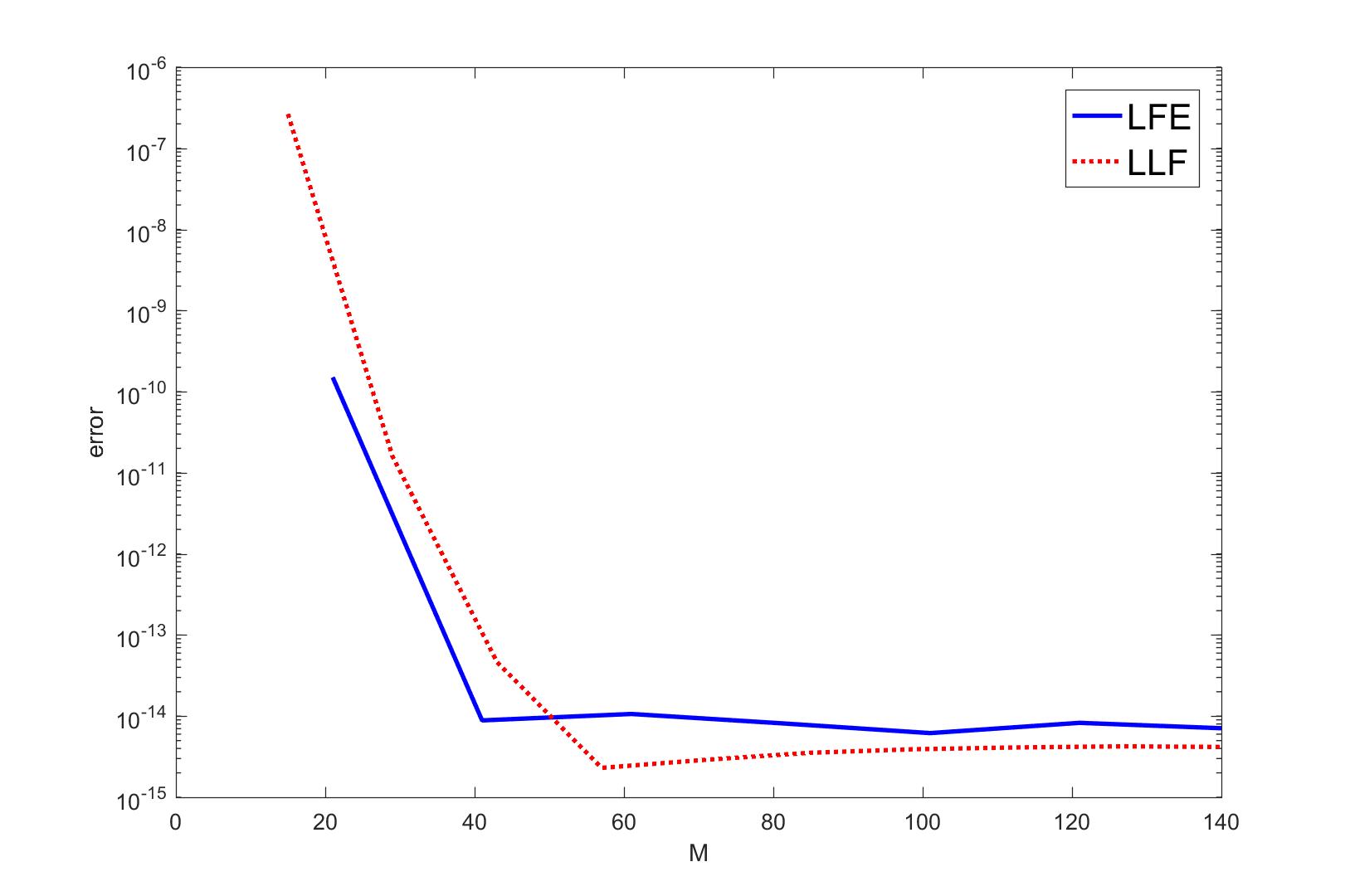}}
}

\caption{Tests on relatively smooth functions. Panels (a), (c), and (e) show the graphs of the test functions \(f_1\), \(f_2\), and \(f_3\), respectively; panels (b), (d), and (f) show the corresponding approximation errors versus the total number of sampling points \(M\). In all three cases, LLF and LFE achieve comparable accuracy with similar node counts, while LLF is slightly more economical in online computation because of its lower retained numerical rank.}
\label{fig:smooth_tests}
\end{center}
\end{figure}
The results (Figure~\ref{fig:smooth_tests}) show that both LLF and LFE exhibit steady convergence and reach near-machine accuracy with comparable node counts.

In this regime, the dominant online cost scales like \(C_\Delta M\). Since LLF typically yields a smaller local numerical rank \(C_\Delta\), it is slightly more efficient. Thus, for smooth or slowly varying functions, LLF provides a competitive and economical alternative to LFE.

\subsection{Highly oscillatory functions}

We next consider
\begin{align}
f_4(x)&=\cos\!\left(\frac{100}{1+25x^2}\right), \qquad x\in[-1,1],\\
f_5(x)&=\cos(200x^2), \qquad x\in[-1,1],\\
f_6(x)&=\mathrm{Ai}(76x), \qquad x\in[-1,1].
\end{align}

The results (Figure~\ref{fig:osc_tests}) show a clear difference between LLF and LFE. For all examples, LFE achieves high accuracy with significantly fewer nodes. LLF requires a much finer discretization, even when using \(m=40\).
\begin{figure}[htbp]
\begin{center}
\subfigure[$f_4(x)$]{
\resizebox*{6cm}{!}{\includegraphics{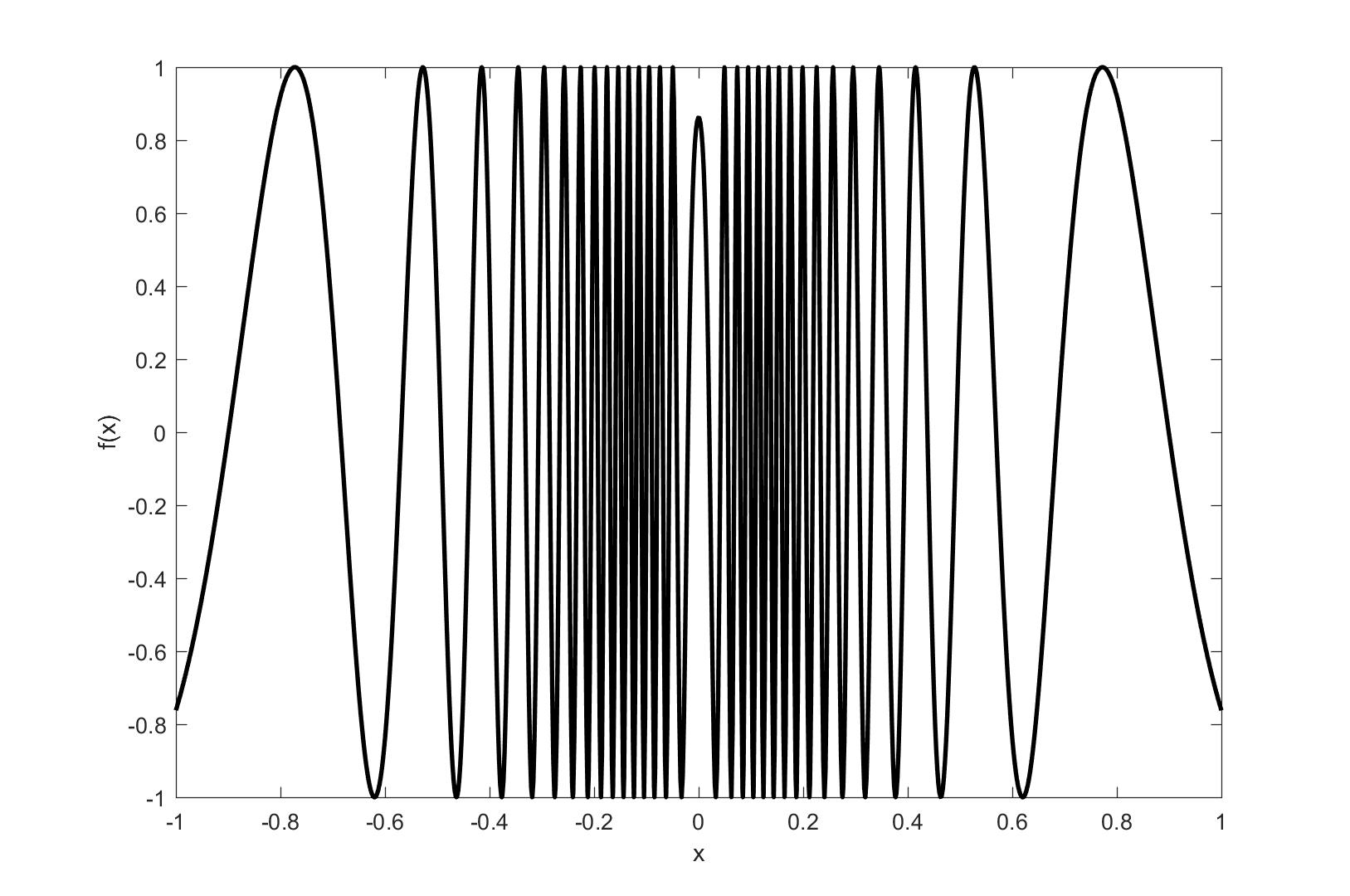}}
}%
\subfigure[Approximation error for \(f_4\)]{
\resizebox*{6cm}{!}{\includegraphics{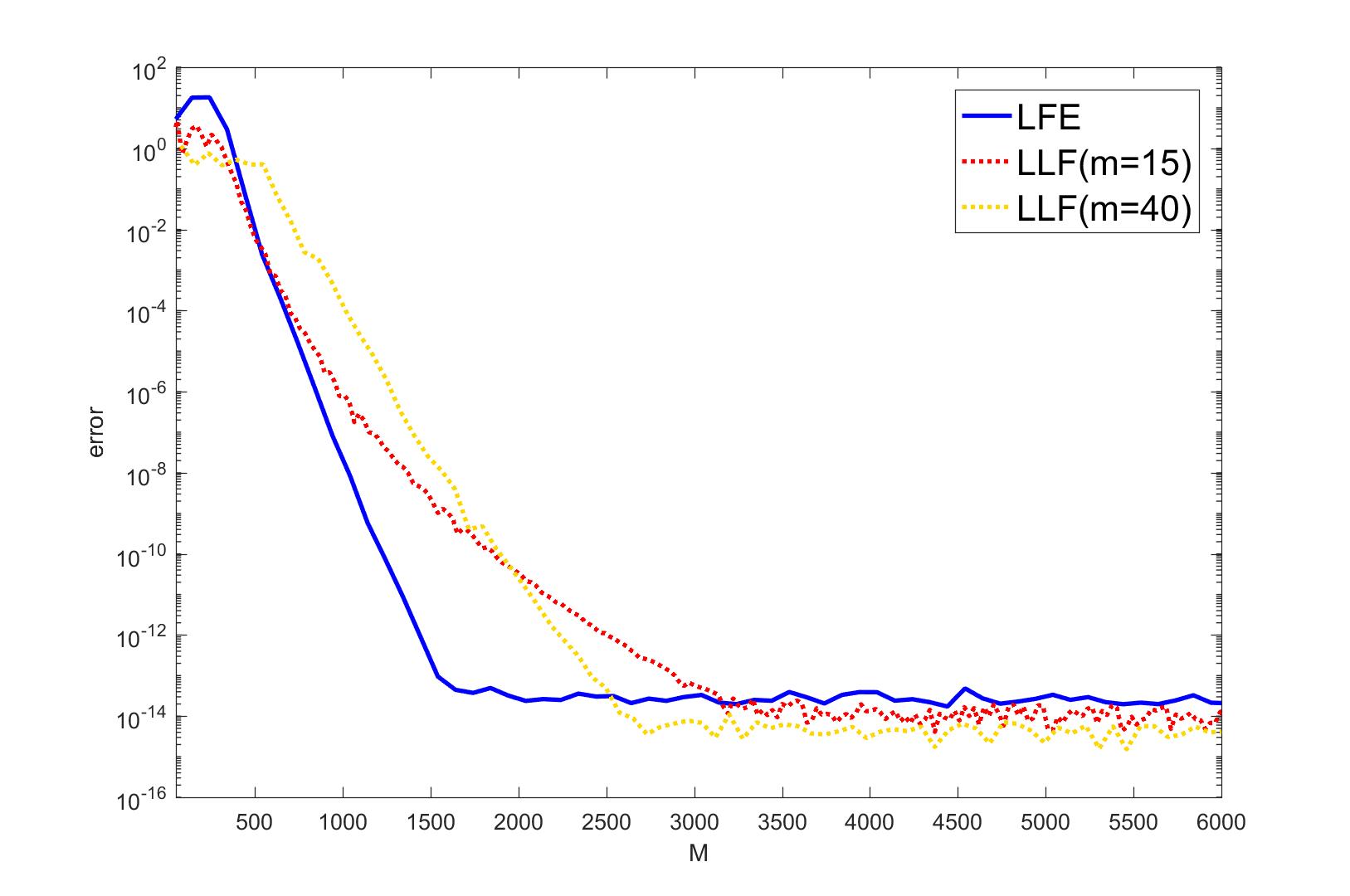}}
}

\subfigure[$f_5(x)$]{
\resizebox*{6cm}{!}{\includegraphics{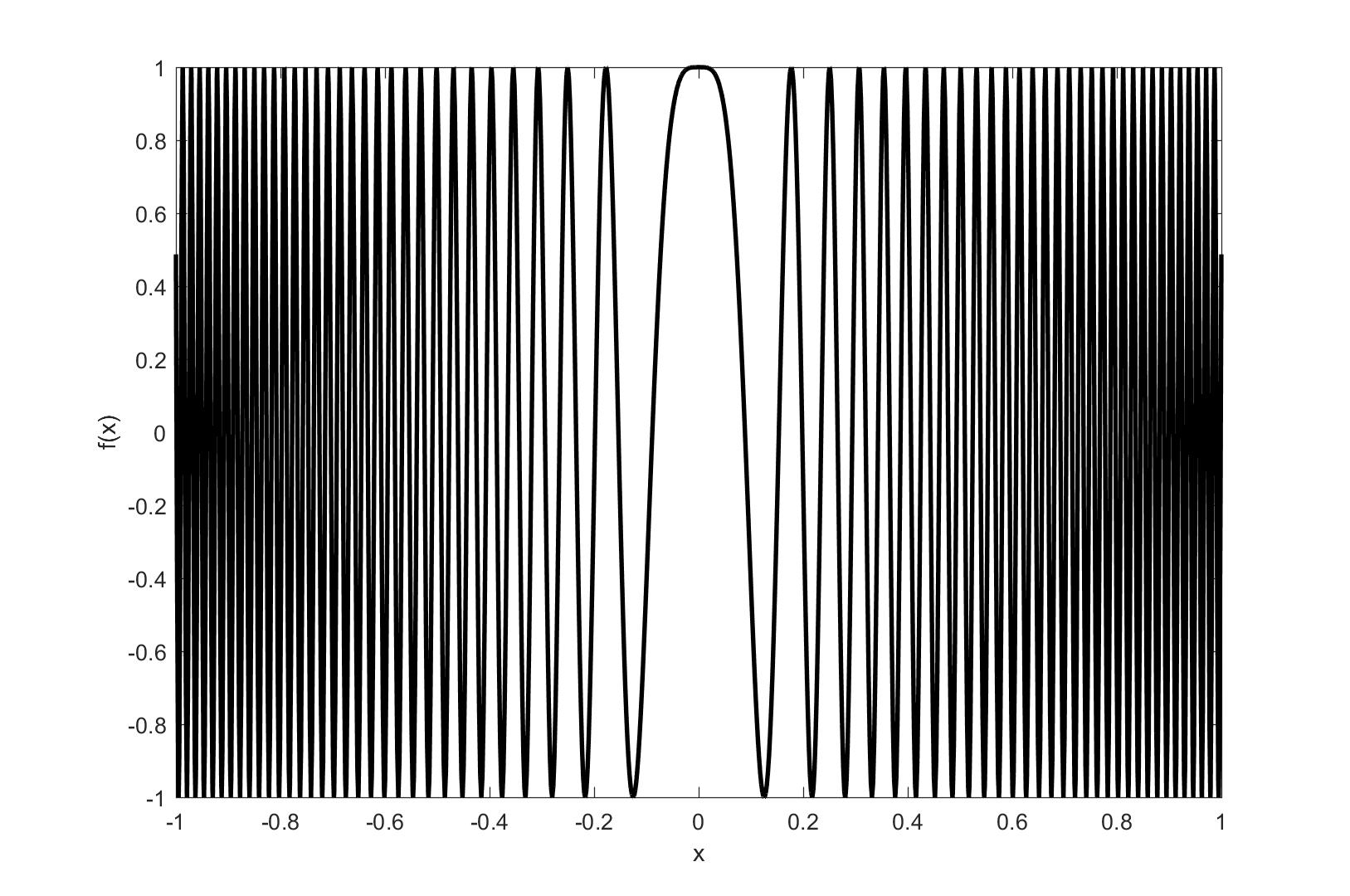}}
}%
\subfigure[Approximation error for \(f_5\)]{
\resizebox*{6cm}{!}{\includegraphics{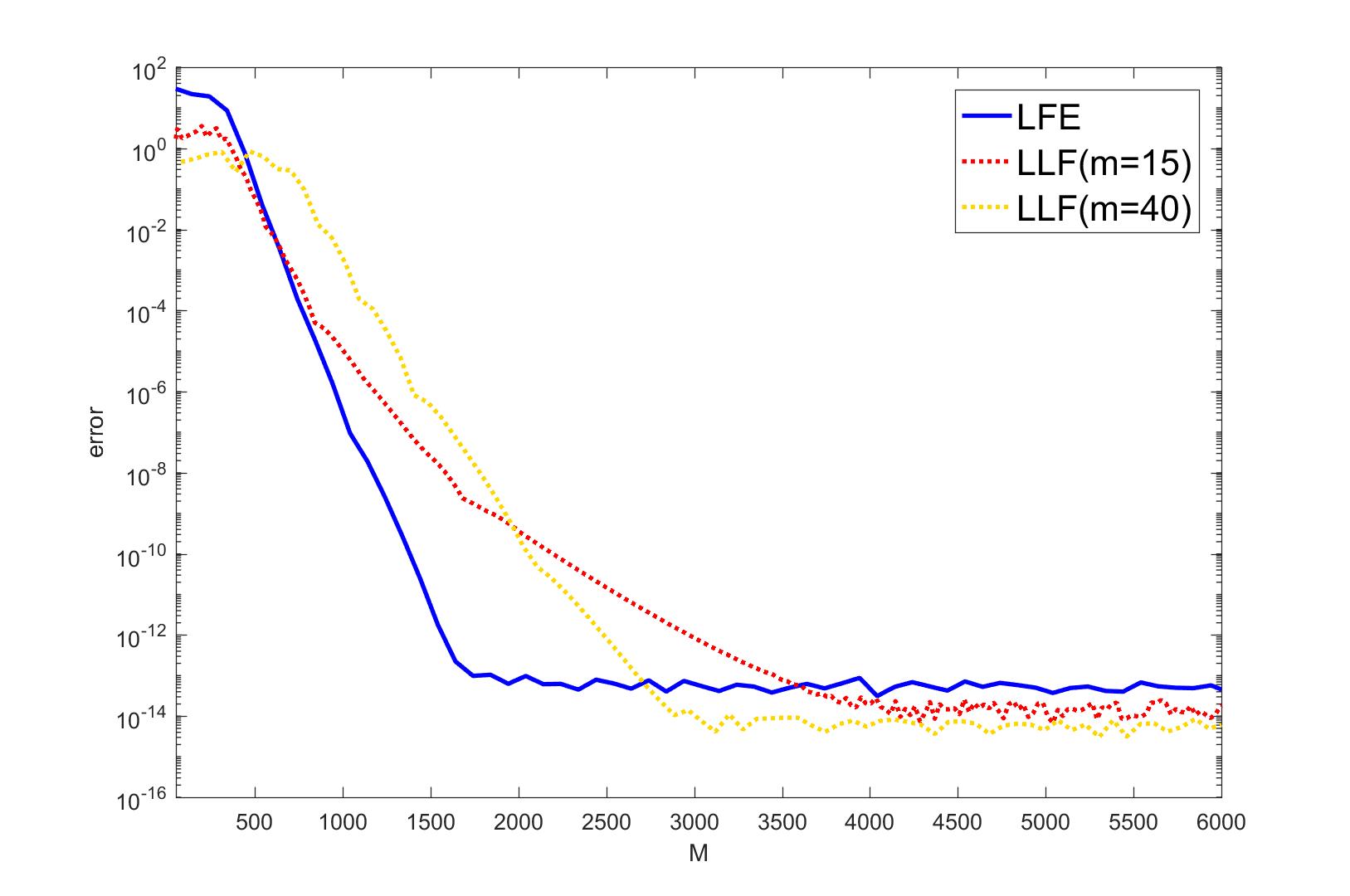}}
}

\subfigure[$f_6(x)$]{
\resizebox*{6cm}{!}{\includegraphics{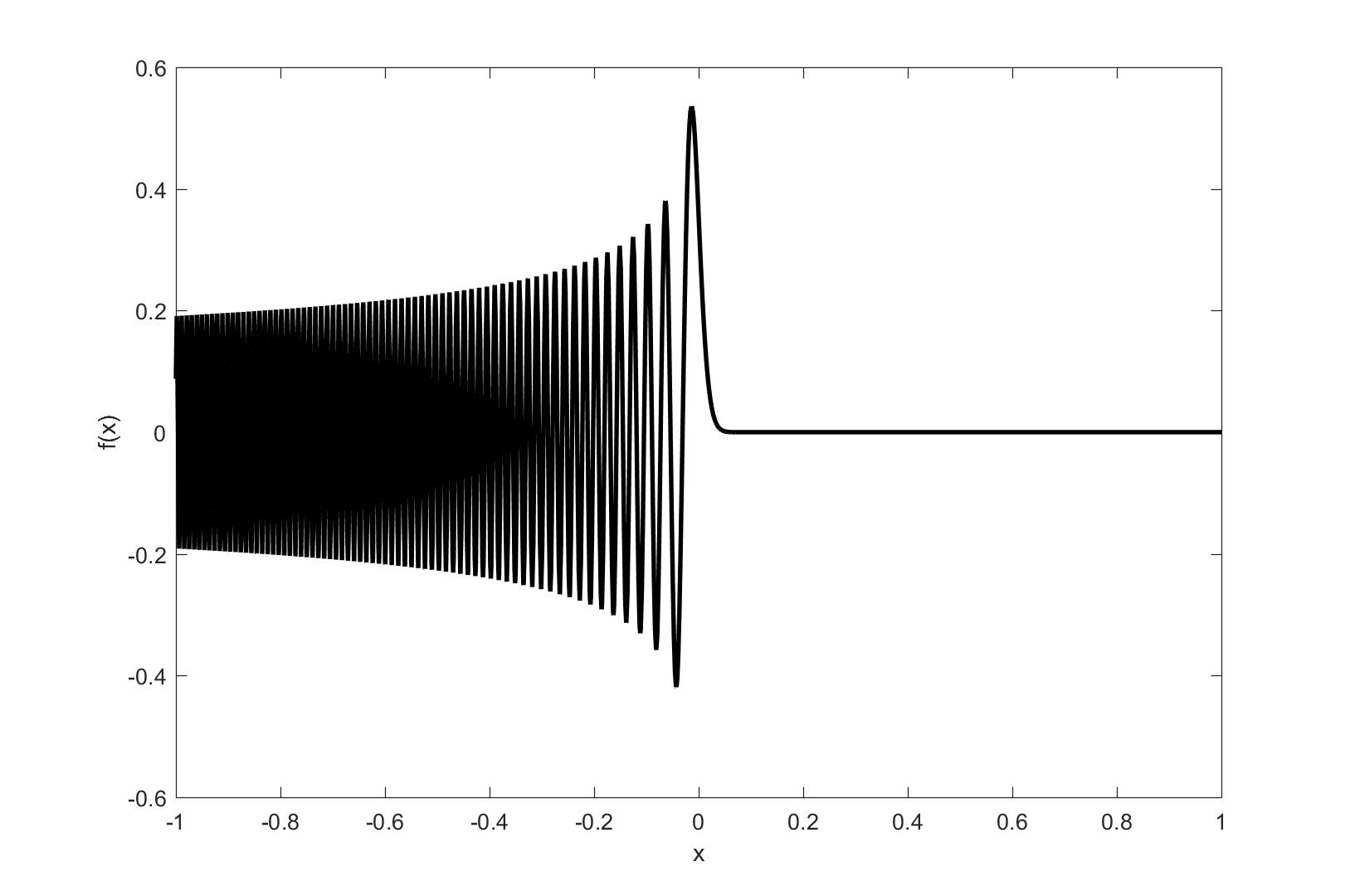}}
}%
\subfigure[Approximation error for \(f_6\)]{
\resizebox*{6cm}{!}{\includegraphics{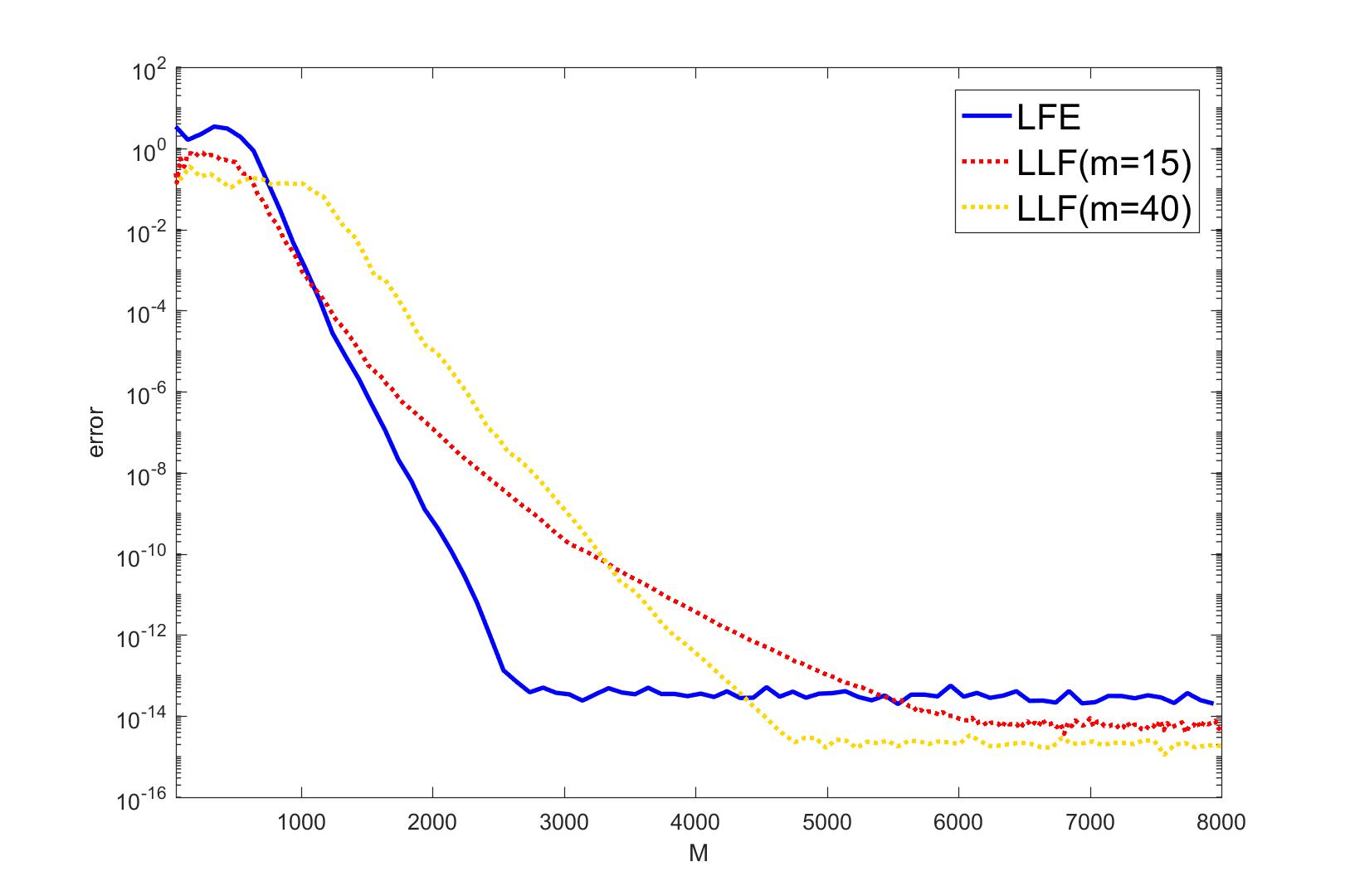}}
}

\caption{Tests on highly oscillatory functions. Panels (a), (c), and (e) show the graphs of the oscillatory test functions \(f_4\), \(f_5\), and \(f_6\), respectively; panels (b), (d), and (f) show the corresponding approximation errors versus the total number of sampling points \(M\). The LLF results are reported for two local choices, \(m=15\) and \(m=40\), and are compared with LFE. The results show that, in the high-frequency regime, LLF requires substantially more total nodes than LFE to attain comparable accuracy.}
\label{fig:osc_tests}
\end{center}
\end{figure}
This reflects the different resolution mechanisms: Fourier-based representations match oscillatory structures directly, whereas polynomial frames require higher local degree or finer subdivision. Consequently, LLF is less efficient in the high-frequency regime.

\subsection{Comparison under a fixed sampling budget}

We further compare LLF and LFE under the same total number of nodes using
\begin{align}
f_7(x)&=\frac{1}{1+25x^2}, \qquad x\in[-1,1],\\
f_8(x)&=\exp\!\bigl(\sin(2.7\pi x)+\cos(\pi x)\bigr), \qquad x\in[-1,1],\\
f_9(x)&=x^2\sin(20x), \qquad x\in[-1,1].
\end{align}

We take \(K=14\) for LFE and \(K=20\) for LLF, so that both use
\[
M=281
\]
nodes.

The results (Figure~\ref{fig:sameM_tests}) show that the two methods achieve nearly identical accuracy. Since LLF uses fewer local nodes per subinterval, its online cost is slightly smaller, whereas LFE tolerates a wider range of oscillatory behavior.

\begin{figure}[htbp]
\begin{center}
\subfigure[$f_7(x)$]{
\resizebox*{6cm}{!}{\includegraphics{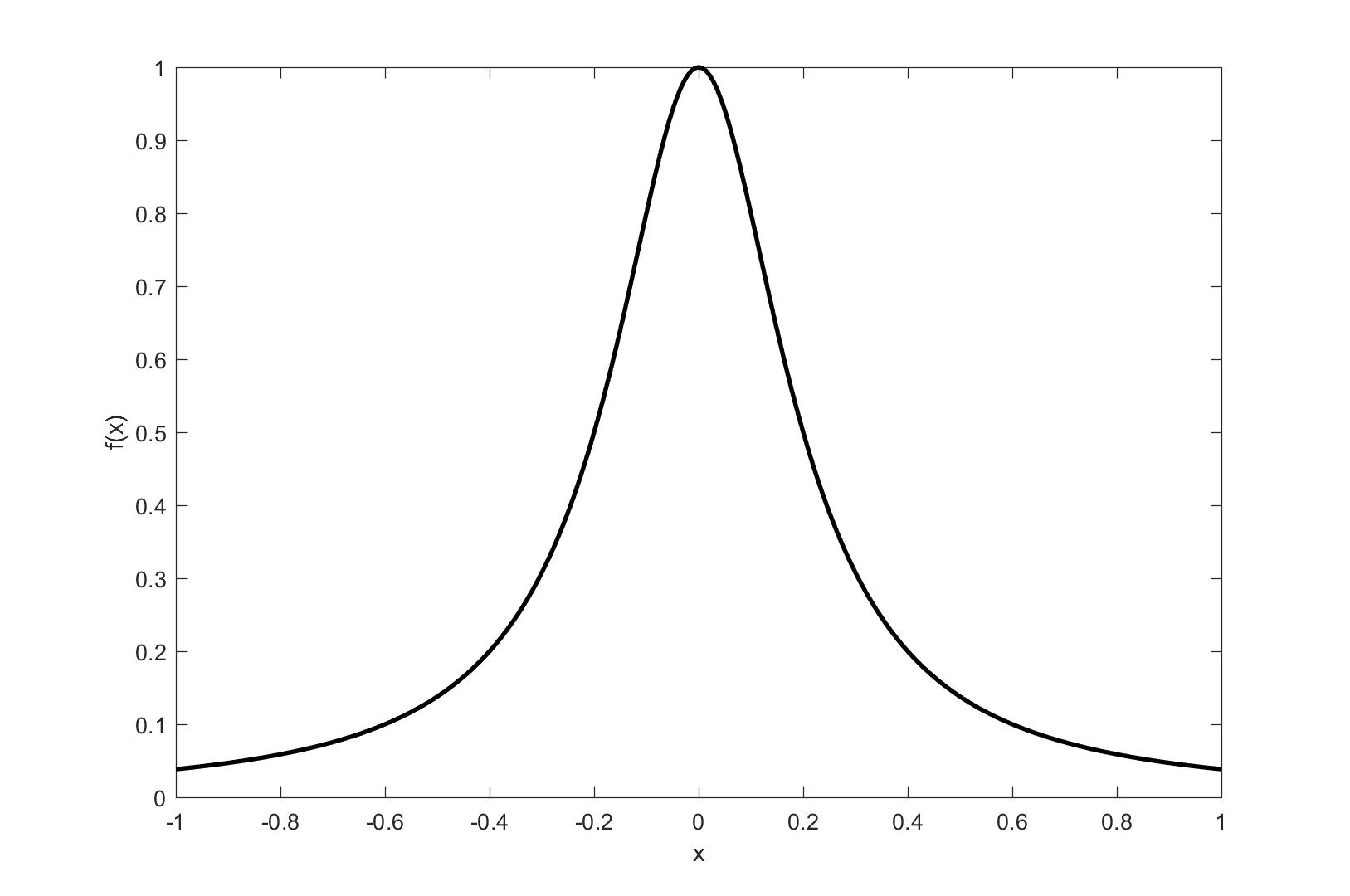}}
}%
\subfigure[Pointwise error for \(f_7\)]{
\resizebox*{6cm}{!}{\includegraphics{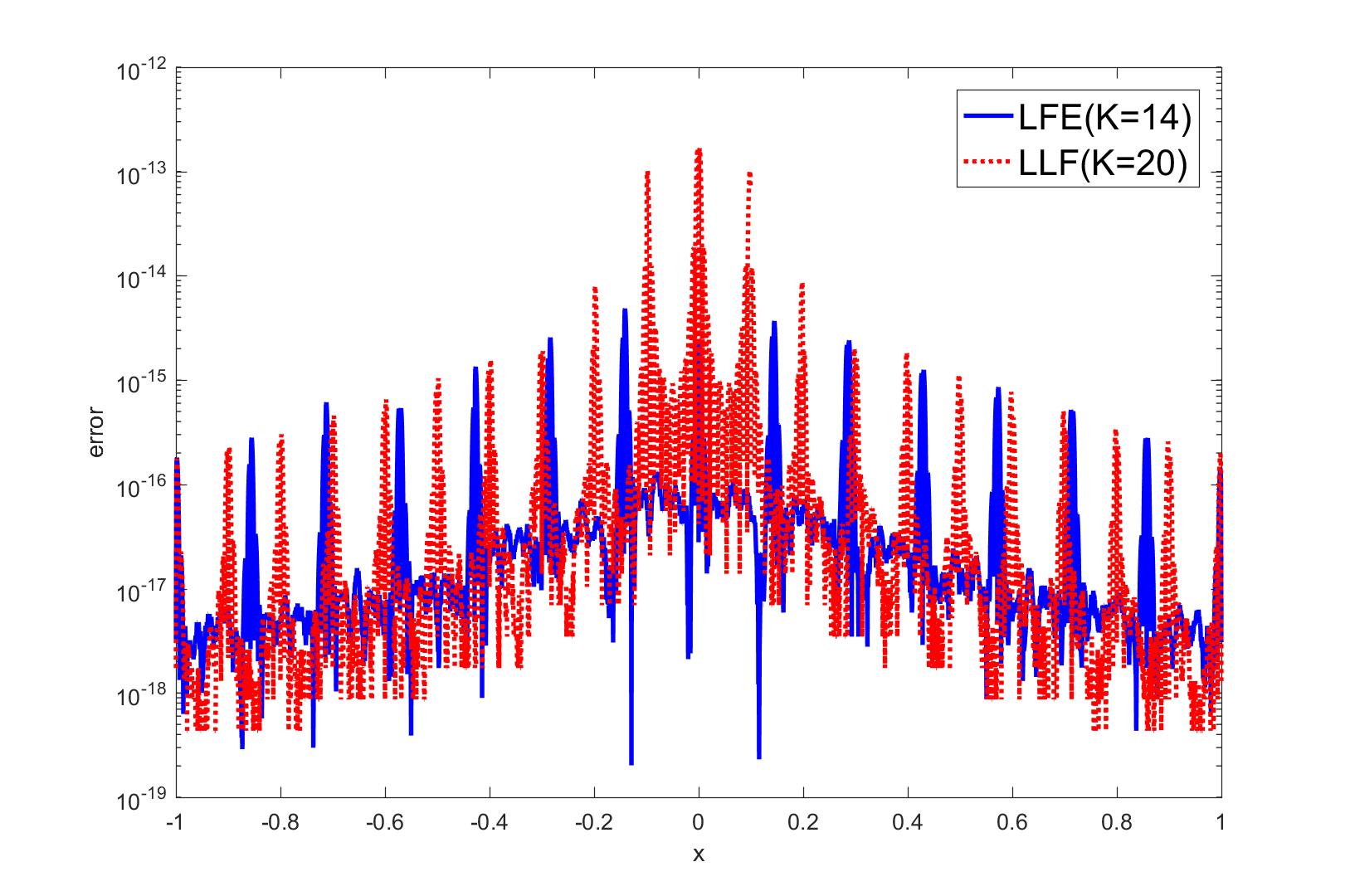}}
}

\subfigure[$f_8(x)$]{
\resizebox*{6cm}{!}{\includegraphics{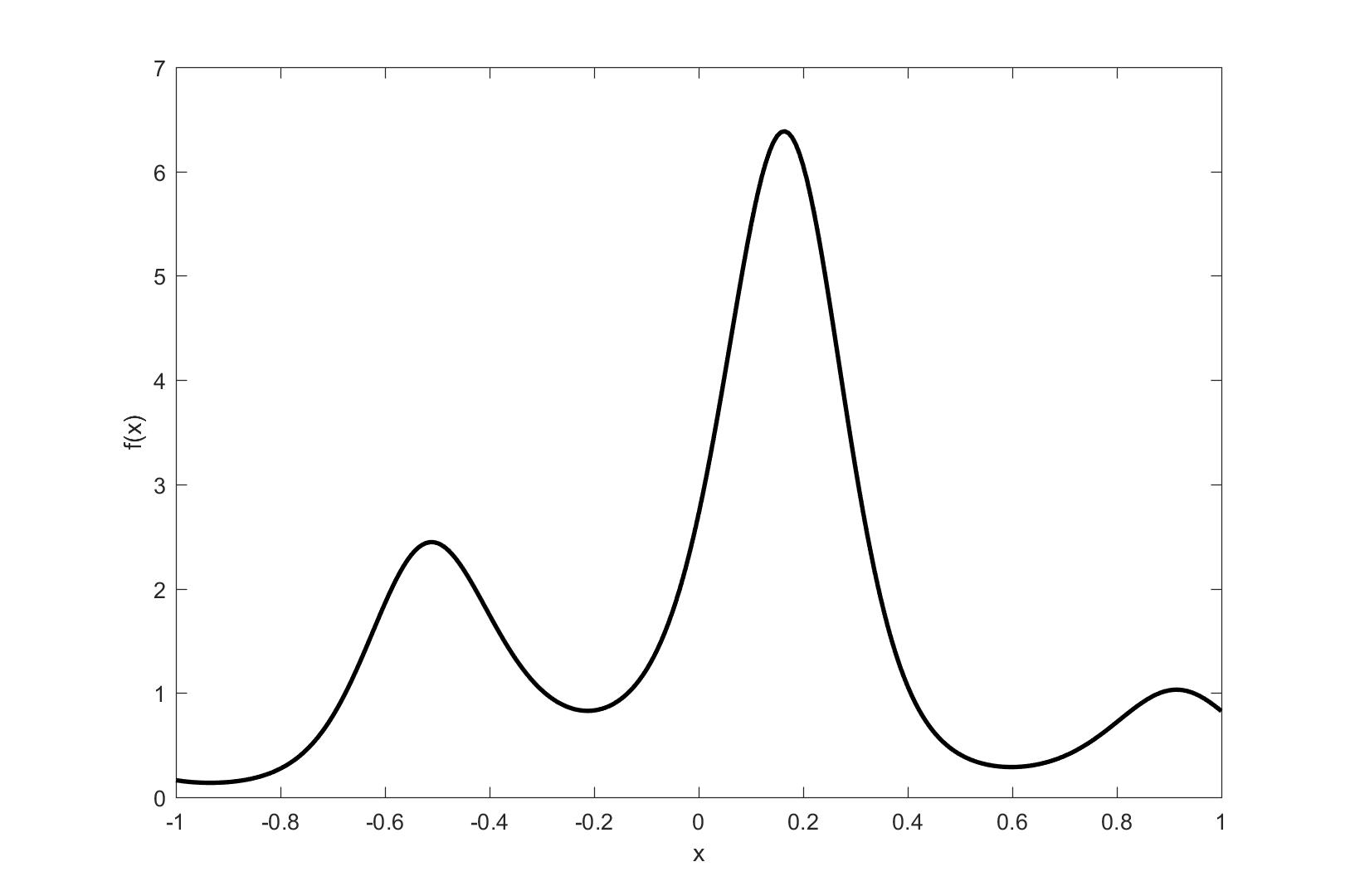}}
}%
\subfigure[Pointwise error for \(f_8\)]{
\resizebox*{6cm}{!}{\includegraphics{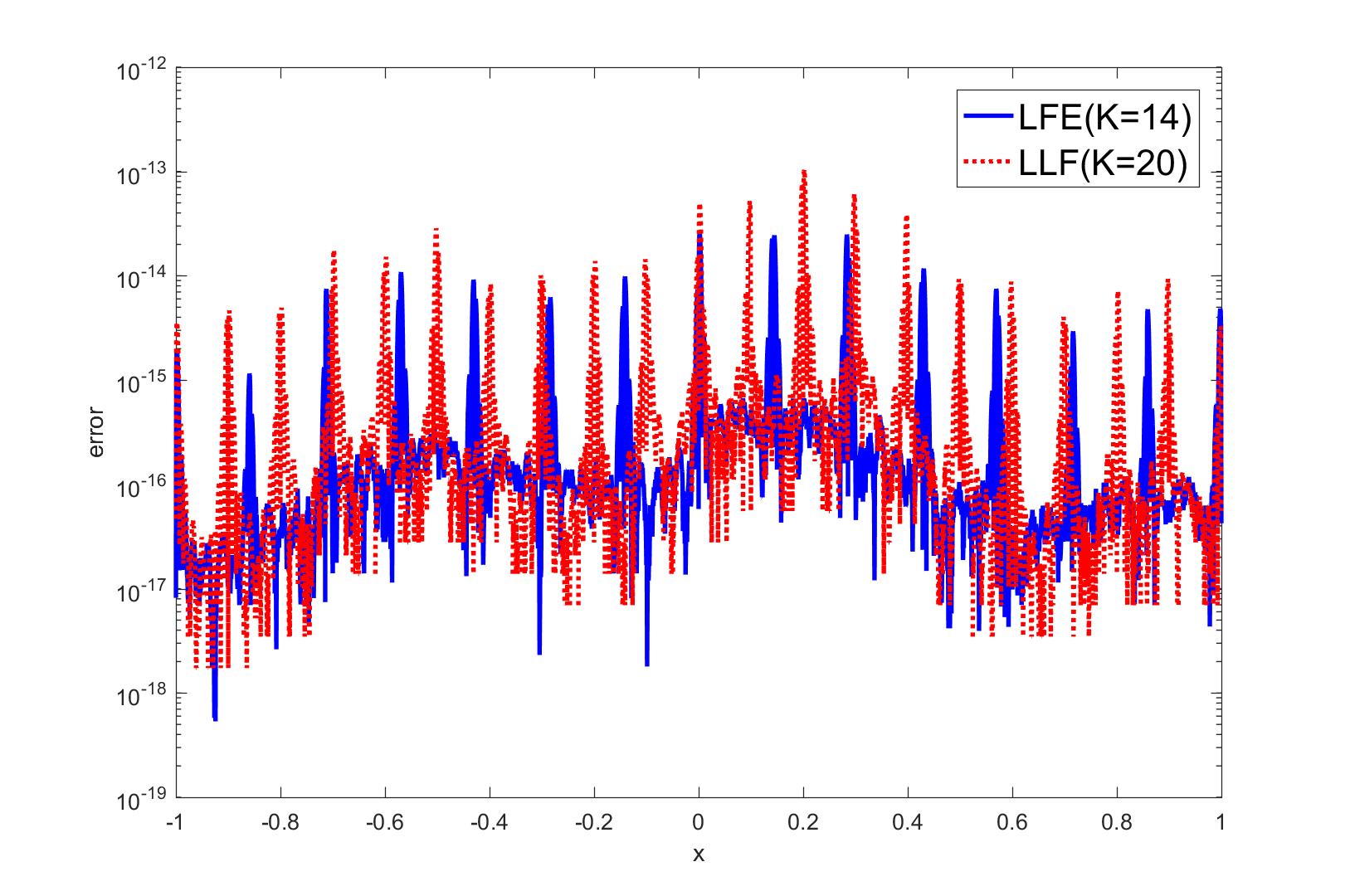}}
}

\subfigure[$f_9(x)$]{
\resizebox*{6cm}{!}{\includegraphics{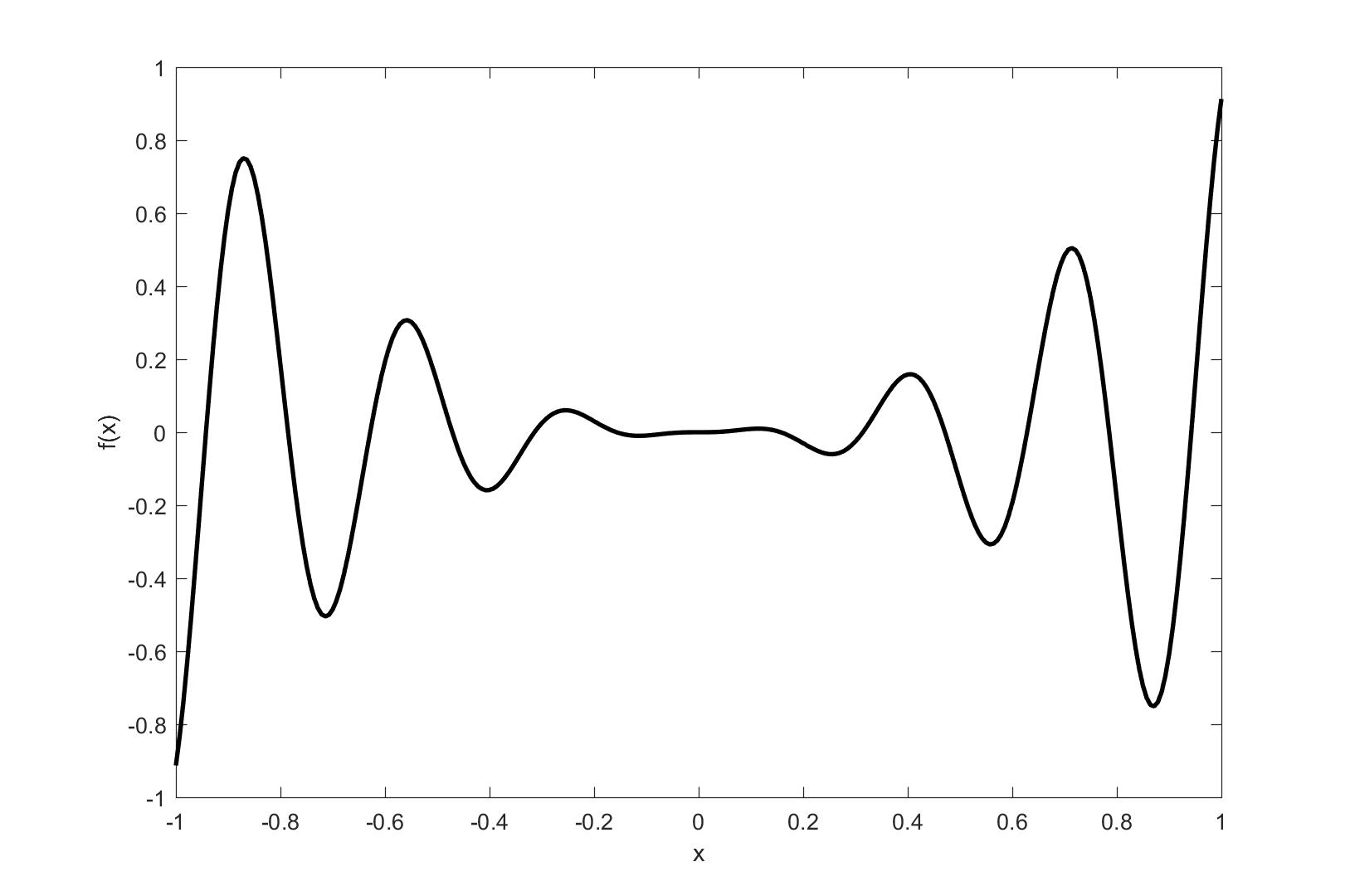}}
}%
\subfigure[Pointwise error for \(f_9\)]{
\resizebox*{6cm}{!}{\includegraphics{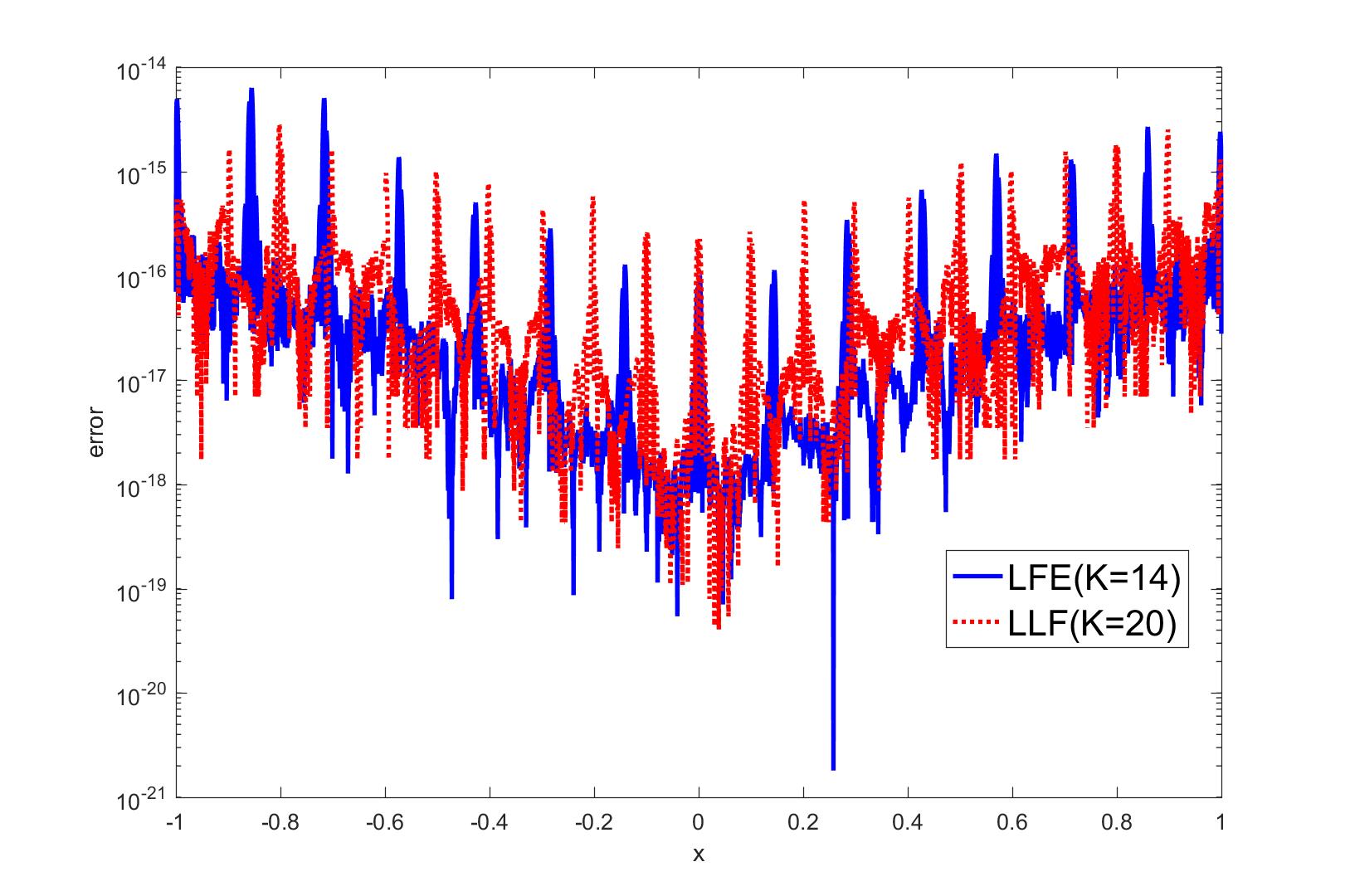}}
}

\caption{Comparison between LLF and LFE under the same total number of sampling points \(M=281\). Panels (a), (c), and (e) show the test functions \(f_7\), \(f_8\), and \(f_9\), respectively; panels (b), (d), and (f) show the corresponding pointwise error distributions. Here LFE uses \(K=14\) with \(m=21\), while LLF uses \(K=20\) with \(m=15\), so that both methods have the same total number of nodes. The results show that, for these moderately oscillatory examples, LLF and LFE attain very similar approximation accuracy under the same sampling budget.}
\label{fig:sameM_tests}
\end{center}
\end{figure}

\subsection{Piecewise smooth functions with derivative singularities}

We consider
\begin{align}
f_1(x)&=e^x\cos(5x)+\frac{x}{1+x^2},\\
f_2(x)&=f_1(x)+x-\xi,\\
f_3(x)&=f_2(x)+(x-\zeta)^2,
\end{align}
and define
\[
f(x)=
\begin{cases}
f_1(x), & x<\xi,\\
f_2(x), & \xi\le x<\zeta,\\
f_3(x), & x\ge \zeta.
\end{cases}
\]

Here \(f\) is continuous, with a first-derivative discontinuity at \(x=\xi\) and a second-derivative discontinuity at \(x=\zeta\).

We test:
\[
(\xi,\zeta)=(0.2,0.75),\quad (0.225,0.775),\quad (0.21,\pi/4).
\]

\paragraph{Detection.}
We use the indicator
\[
\eta_k=\|c_k^\epsilon\|_2.
\]
As shown in Figure~\ref{fig:piecewise_detection}, \(\eta_k\) remains stable in smooth regions and exhibits clear spikes when a singularity lies inside a subinterval.

\paragraph{Localization.}
For each detected region, we compute one-sided indicators
\[
\eta_{L,i},\qquad \eta_{R,i}.
\]
Their jump behavior identifies the singular location. Coinciding jumps indicate alignment with a sampling node; offset jumps indicate a singularity inside a grid cell.

\paragraph{Correction.}
For each singularity-containing subinterval, we construct two one-sided LLF reconstructions using predicted interface values and define a piecewise approximation.

Figures~\ref{fig:llf_sing_loc1}--\ref{fig:llf_sing_loc2} show that the correction significantly reduces local errors while preserving global accuracy.

\begin{figure}[htbp]
\begin{center}
\subfigure[Indicator \(\eta_k\), Case I: \(\xi=0.2\), \(\zeta=0.75\)]{
\resizebox*{5.2cm}{!}{\includegraphics{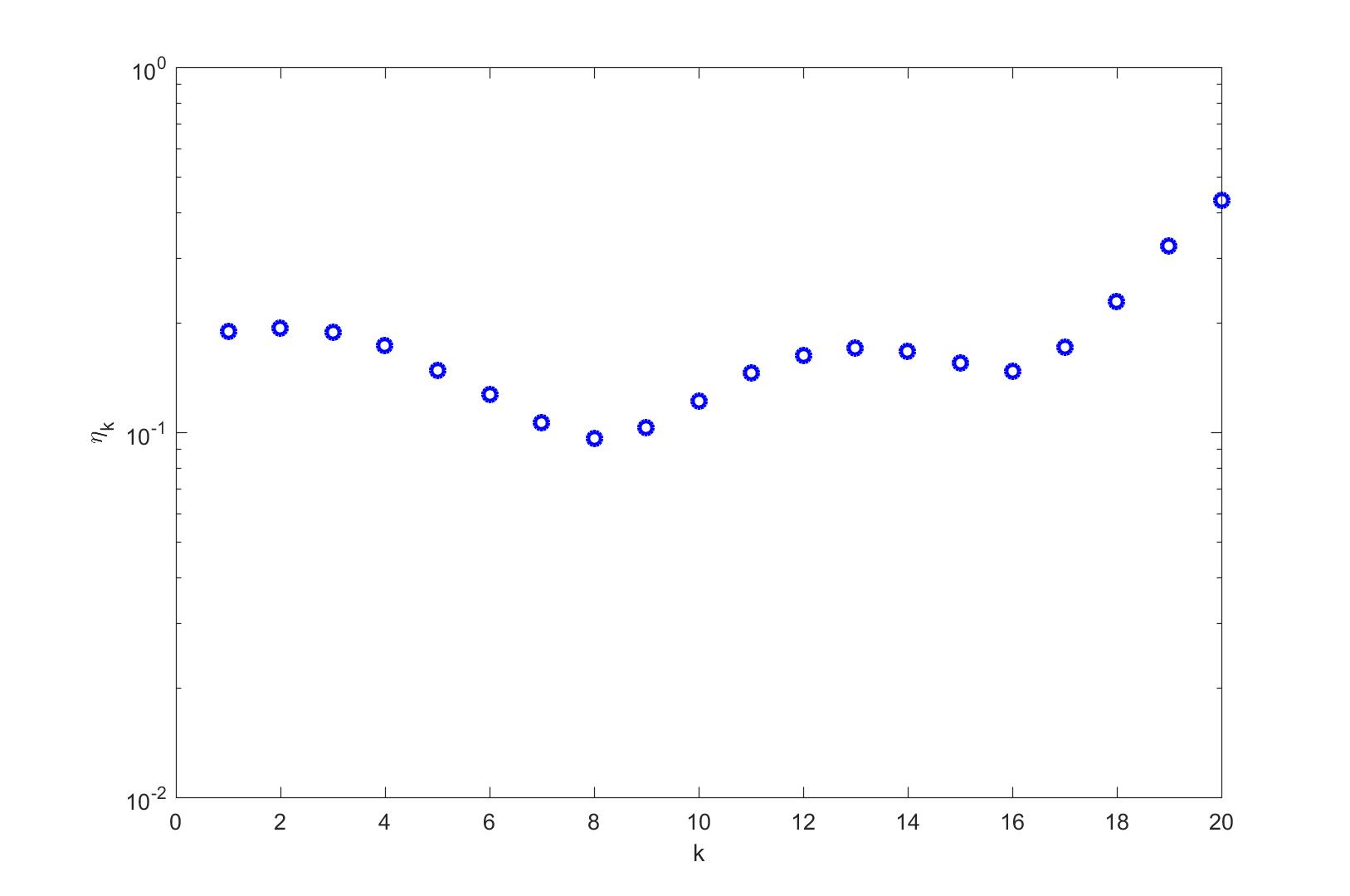}}
}%
\subfigure[Pointwise error, Case I]{
\resizebox*{5.2cm}{!}{\includegraphics{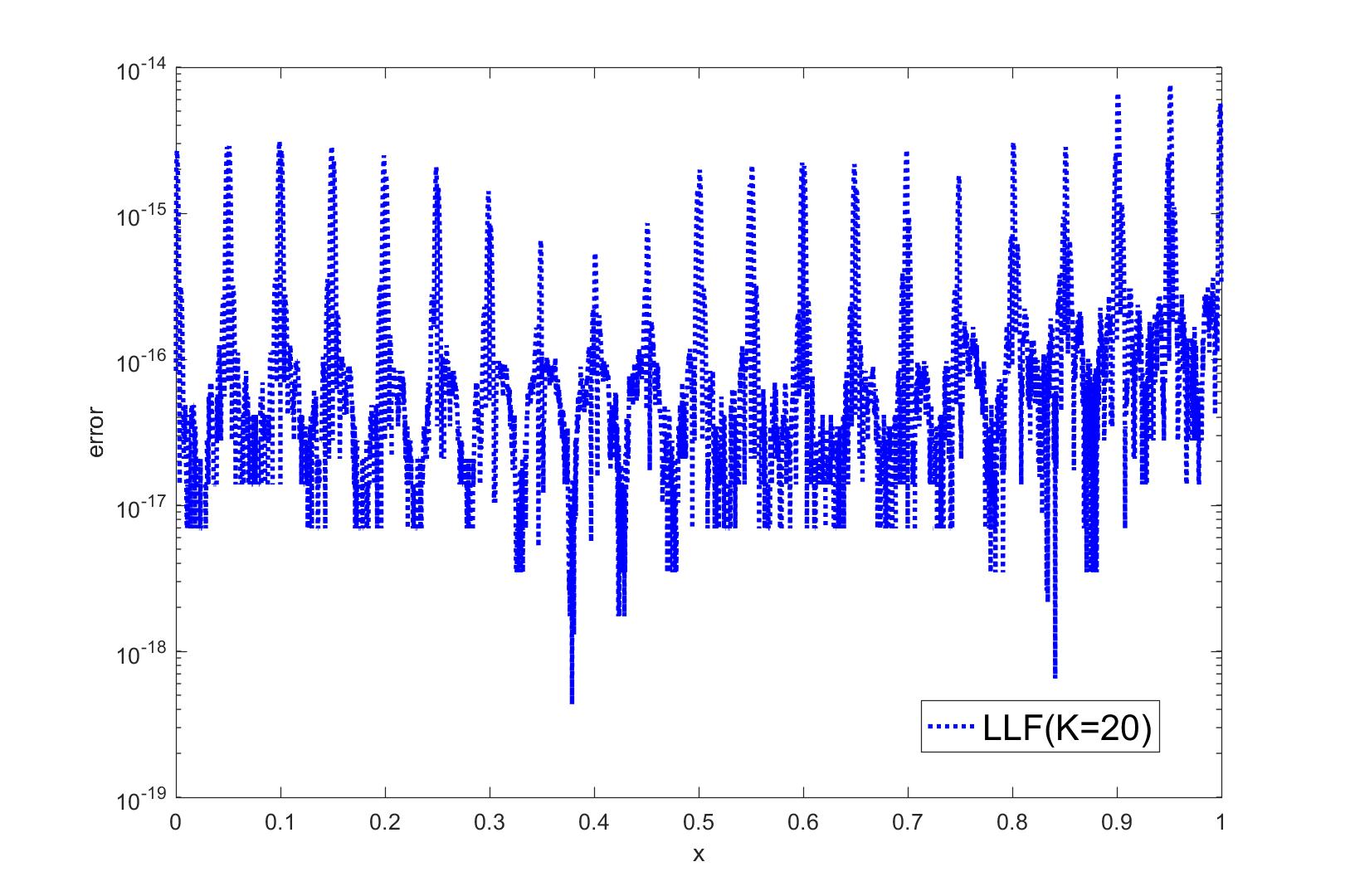}}
}

\subfigure[Indicator \(\eta_k\), Case II: \(\xi=0.225\), \(\zeta=0.775\)]{
\resizebox*{5.2cm}{!}{\includegraphics{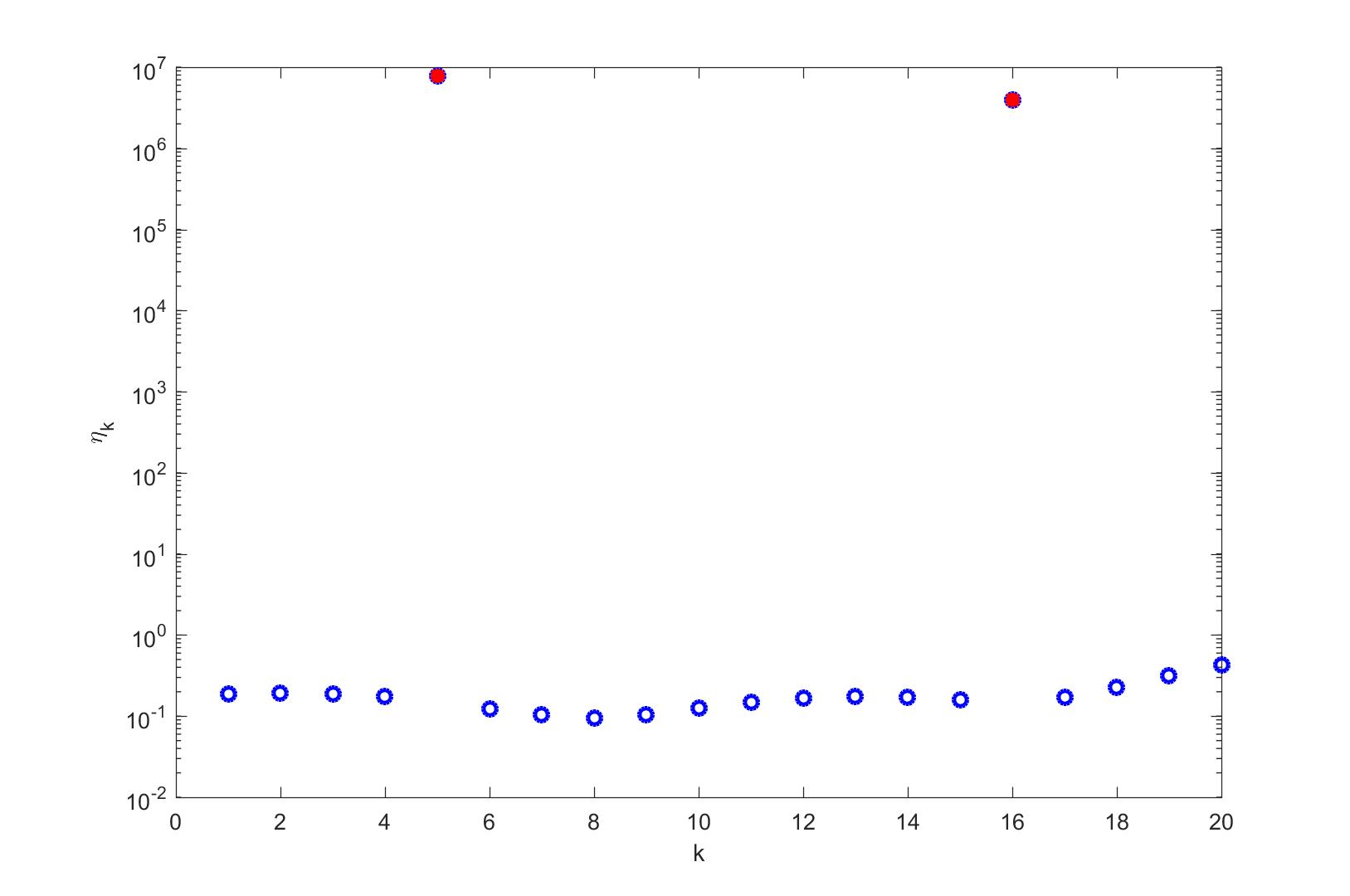}}
}%
\subfigure[Pointwise error, Case II]{
\resizebox*{5.2cm}{!}{\includegraphics{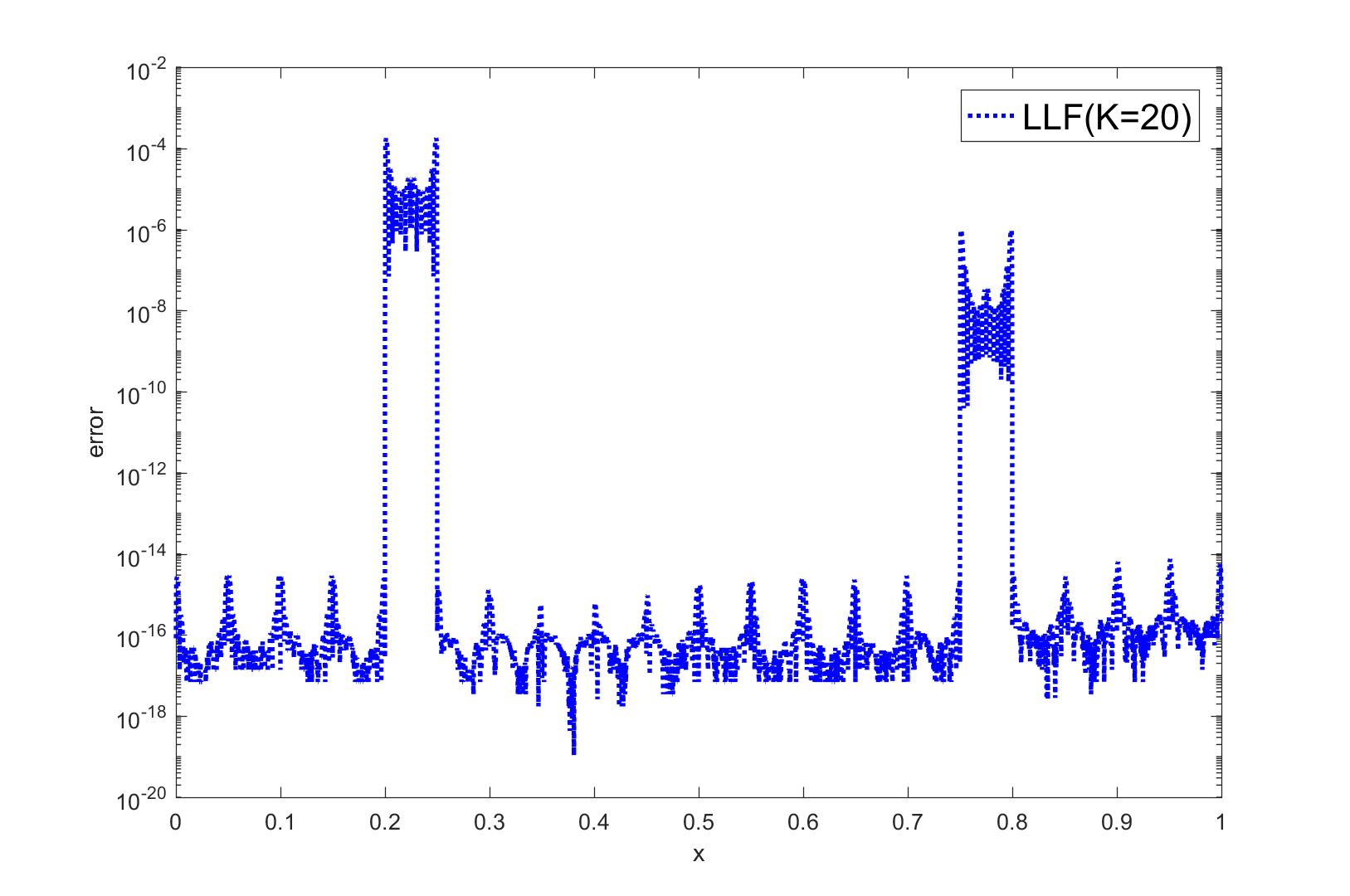}}
}

\subfigure[Indicator \(\eta_k\), Case III: \(\xi=0.21\), \(\zeta=\pi/4\)]{
\resizebox*{5.2cm}{!}{\includegraphics{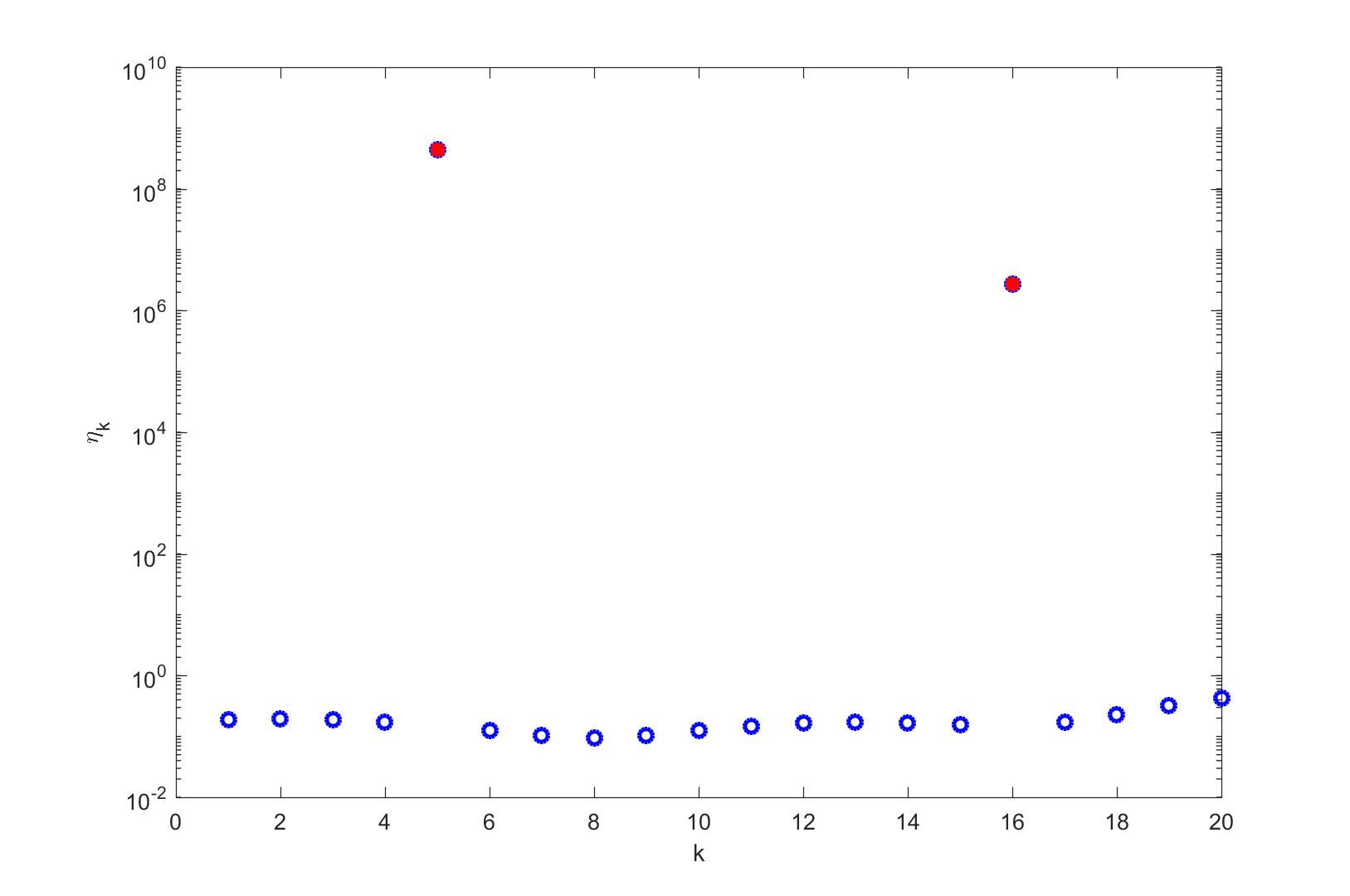}}
}%
\subfigure[Pointwise error, Case III]{
\resizebox*{5.2cm}{!}{\includegraphics{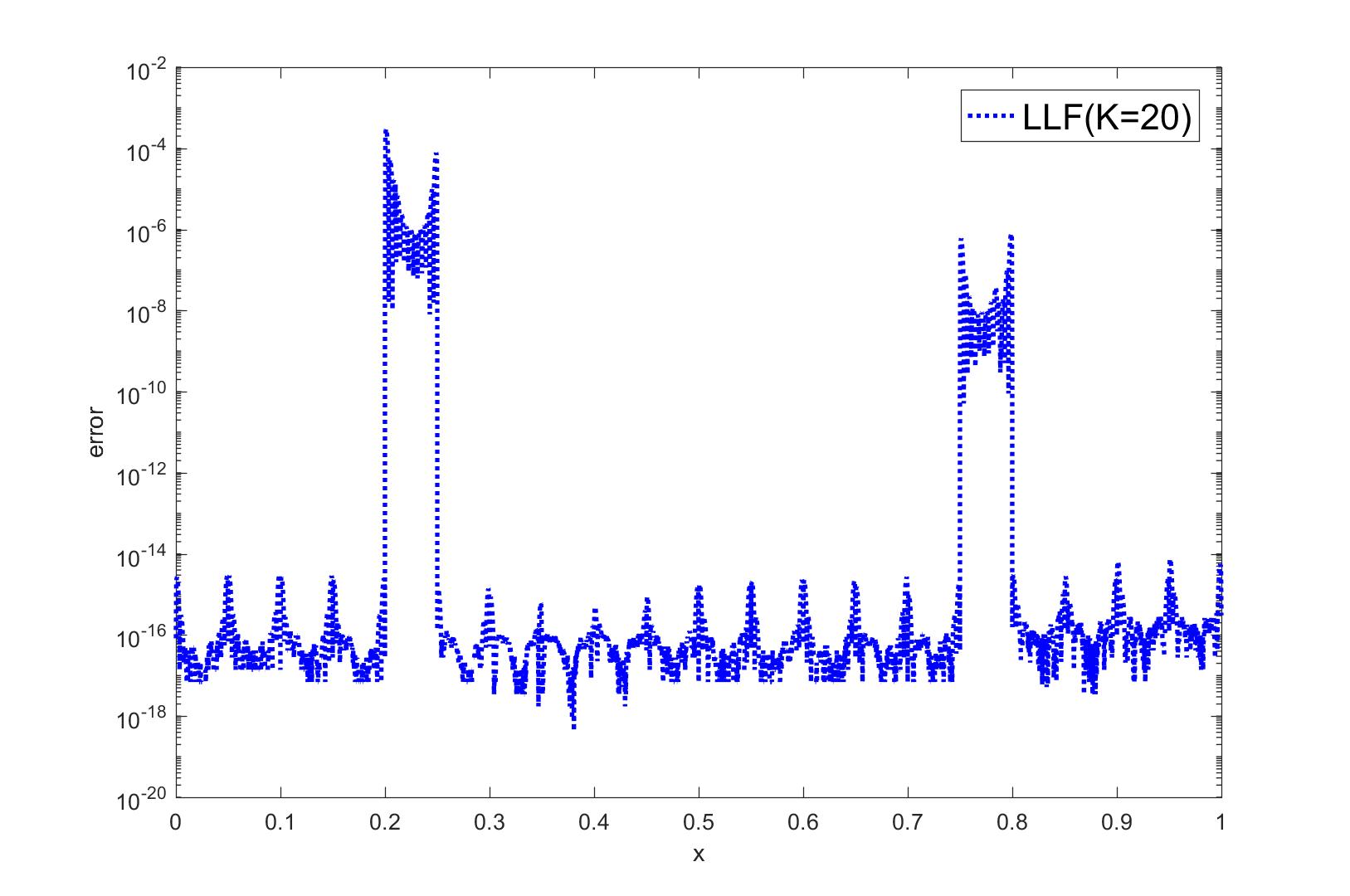}}
}

\caption{Detection of singularity-containing subintervals for the continuous piecewise smooth test function \eqref{eq:piecewise_test}. In Case I, the singular points are located at subinterval endpoints, so no abnormal growth of the indicator is observed and the approximation error remains at the level of machine precision. In Cases II and III, where the singular points lie inside subintervals, the coefficient-energy indicator \(\eta_k\) exhibits clear spikes, and the pointwise error distributions show corresponding local deterioration.}
\label{fig:piecewise_detection}
\end{center}
\end{figure}

\begin{figure}[htbp]
\begin{center}
\subfigure[One-sided localization indicator for the first detected singular window.]{
\resizebox*{5.2cm}{!}{\includegraphics{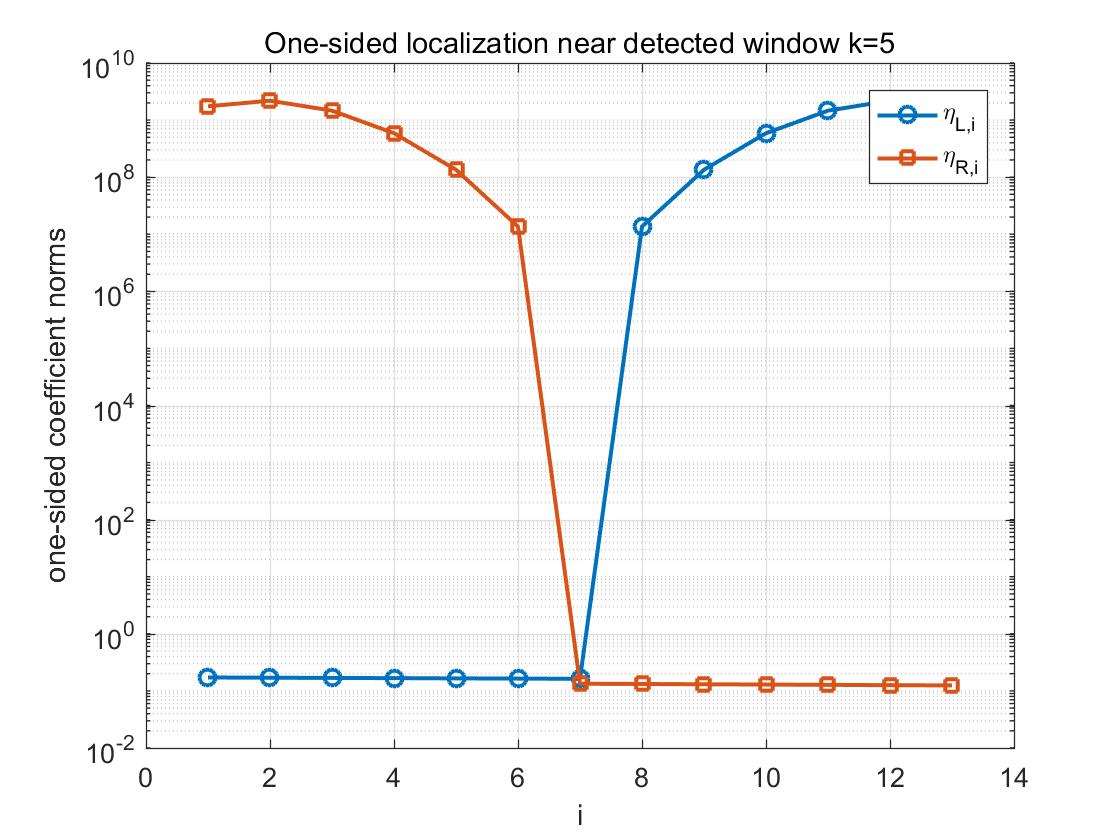}}
}%
\subfigure[One-sided localization indicator for the second detected singular window.]{
\resizebox*{5.2cm}{!}{\includegraphics{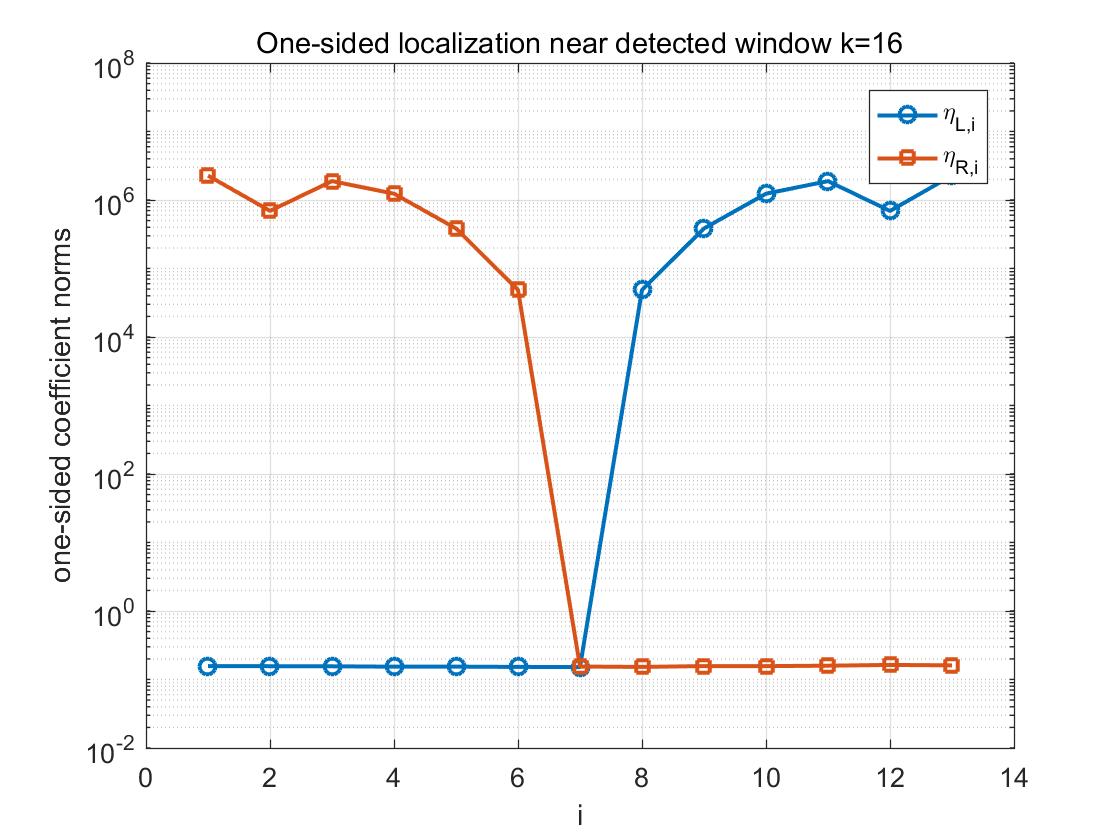}}
}

\subfigure[Pointwise error before and after correcting both detected singular windows.]{
\resizebox*{7.2cm}{!}{\includegraphics{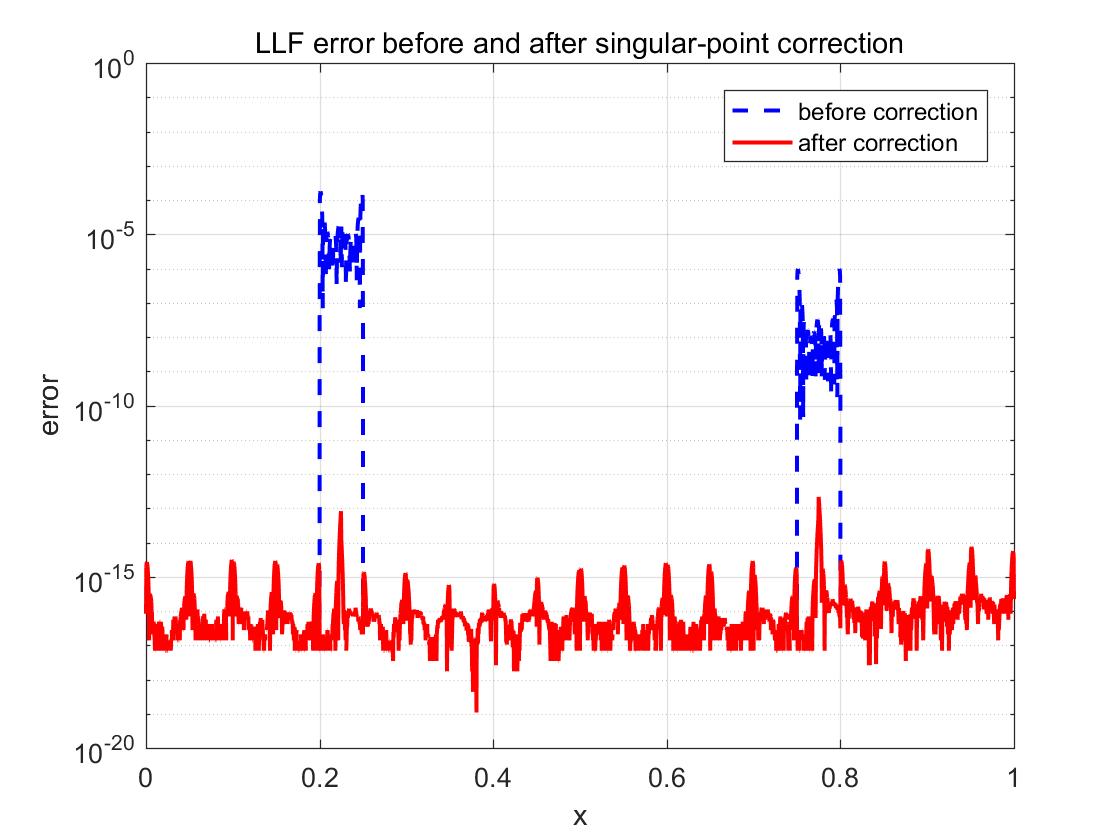}}
}

\caption{Localization and correction for one representative configuration of the piecewise smooth test function. Panels (a) and (b) show the one-sided coefficient-energy indicators \(\eta_{L,i}\) and \(\eta_{R,i}\) for the two detected singularity-containing subintervals. Panel (c) compares the global pointwise errors before and after applying the one-sided correction to both singular windows. The correction substantially reduces the local errors near the singular points while preserving high accuracy on the remaining smooth region.}
\label{fig:llf_sing_loc1}
\end{center}
\end{figure}

\begin{figure}[htbp]
\begin{center}
\subfigure[One-sided localization indicator for the first detected singular window.]{
\resizebox*{5.2cm}{!}{\includegraphics{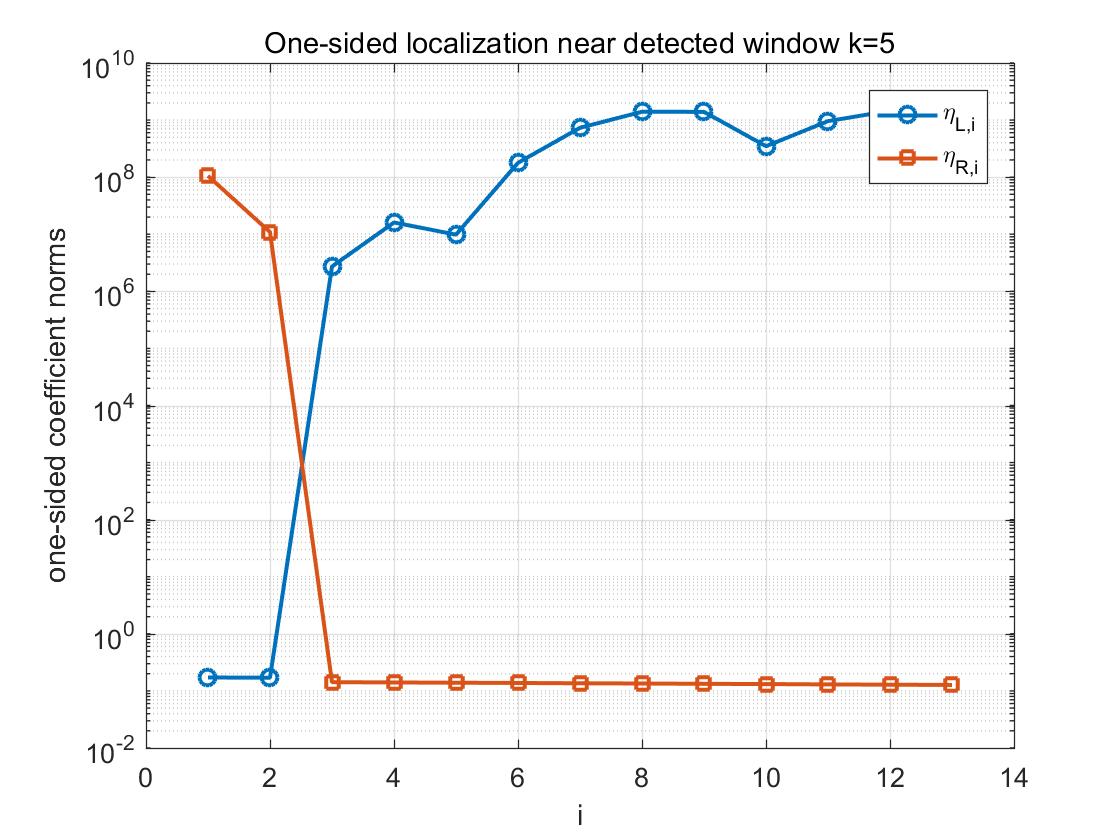}}
}%
\subfigure[One-sided localization indicator for the second detected singular window.]{
\resizebox*{5.2cm}{!}{\includegraphics{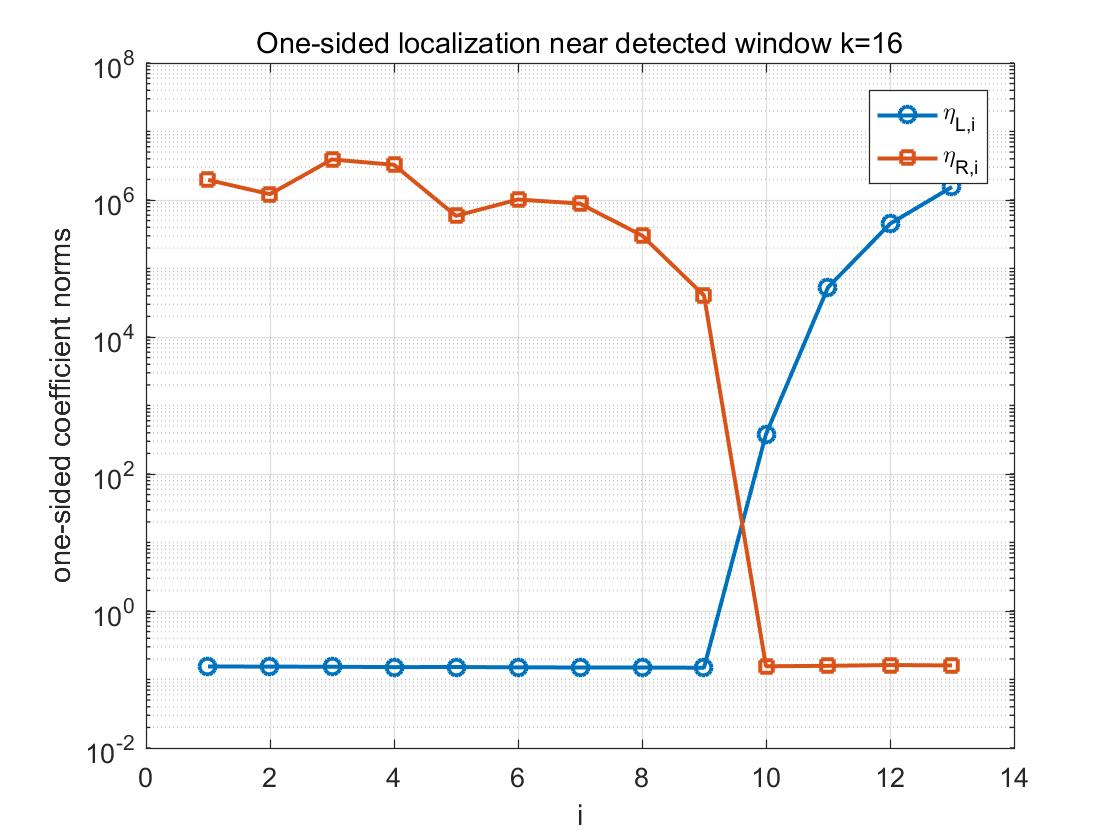}}
}

\subfigure[Pointwise error before and after correcting both detected singular windows.]{
\resizebox*{7.2cm}{!}{\includegraphics{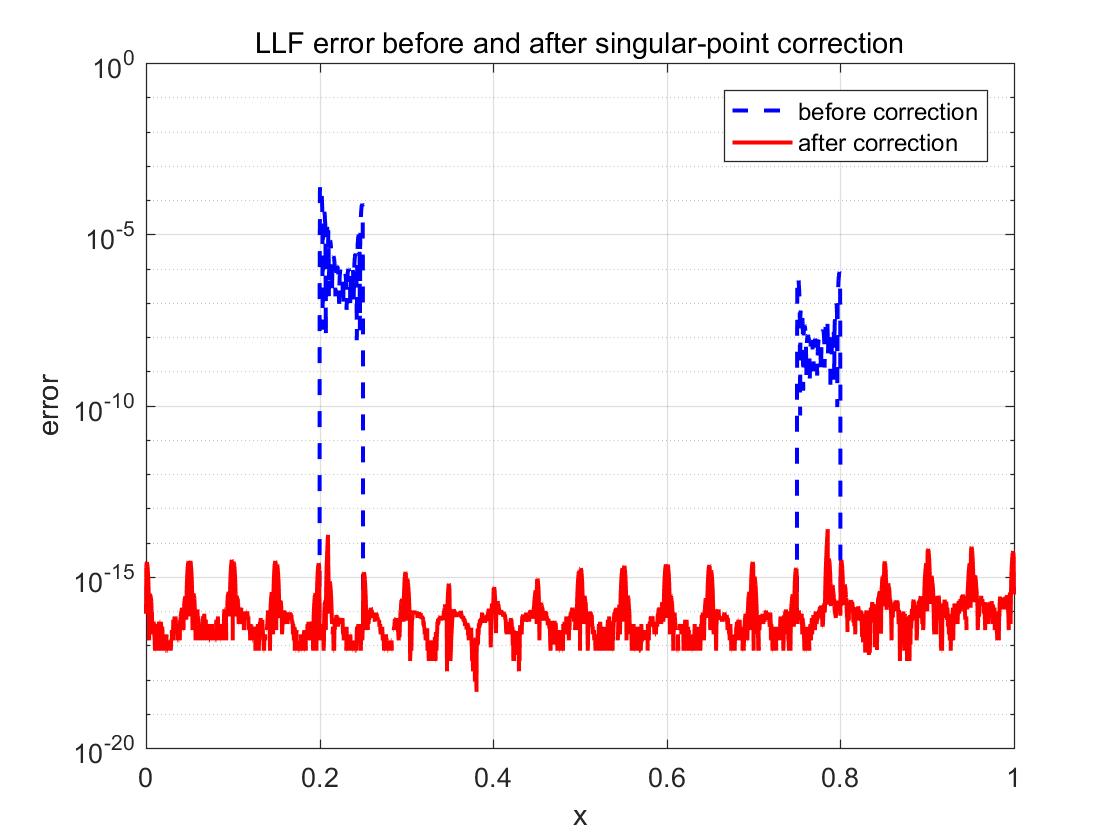}}
}

\caption{A second representative configuration of singularity localization and correction. Panels (a) and (b) again show the one-sided indicators \(\eta_{L,i}\) and \(\eta_{R,i}\) for the two detected singularity-containing subintervals, while panel (c) displays the global pointwise errors before and after correction. The corrected LLF approximation removes the large local errors present in the original approximation and restores high accuracy away from the singular regions.}
\label{fig:llf_sing_loc2}
\end{center}
\end{figure}
\subsection{Discussion}

The experiments reveal a clear regime-dependent behavior:

\begin{itemize}
\item For smooth and moderately oscillatory functions, LLF and LFE achieve comparable accuracy, with LLF slightly cheaper.
\item For highly oscillatory functions, LLF requires significantly more nodes, and LFE is more efficient.
\item For piecewise smooth functions, LLF supports an effective detect--localize--correct strategy.
\end{itemize}

Thus, LLF is a competitive method for smooth and moderately oscillatory data, while LFE remains preferable for strongly oscillatory problems. These observations highlight that LLF is particularly suitable for problems where stability and local adaptivity are more critical than optimal resolution of high-frequency oscillations.

\section{Conclusion}\label{sec:conclusion}

We proposed a local Legendre frame method for approximation from equispaced data. The method combines interval subdivision with TSVD-regularized polynomial-frame approximation and admits an efficient offline--online implementation.

The parameter study shows that \(T\) controls stability, while \(N\) determines resolution. A practical choice is
\[
\gamma=1,\qquad T=6,
\]
with \(N\) selected according to local frequency.

Numerical results show that LLF performs comparably to LFE for smooth and moderately oscillatory functions, but requires more nodes for highly oscillatory problems. For piecewise smooth functions, LLF provides an effective framework for detecting and correcting singularities.

The proposed framework also provides a natural bridge between polynomial approximation and frame-based regularization, suggesting potential extensions toward hybrid approximation strategies. Future work includes adaptive partition strategies, theoretical analysis of numerical rank, and hybrid polynomial--Fourier approaches.

\section{Acknowledgments}
The work was partly supported by Natural Science Foundation of Shandong Province under Grant No. ZR2025MS28.

\end{document}